\documentclass[12pt]{amsart}
\usepackage{amssymb}
\usepackage{float}

\usepackage{amsmath}
\numberwithin{equation}{section} 
\usepackage{amsfonts}
\usepackage{graphicx} 
\usepackage{amsthm}
\usepackage{color}
\usepackage{enumerate}
\usepackage{hyperref}
\usepackage{marginnote}
\usepackage{enumitem} 
\usepackage{tikz-cd}

\usepackage{kotex}

\setlist[enumerate]{itemsep=0.2em, topsep=0.25em}

\usepackage[a4paper,left=3cm,right=3cm,top=2.5cm,bottom=3cm]{geometry}

\theoremstyle{definition}
\newtheorem{theorem}{Theorem}[section]
\newtheorem{introthm}{Theorem}

\newtheorem{cor}[theorem]{Corollary}
\newtheorem{prop}[theorem]{Proposition}
\newtheorem{lemma}[theorem]{Lemma}

\theoremstyle{definition}
\newtheorem{example}[theorem]{Example}
\newtheorem{question}[theorem]{Question}
\newtheorem{definition}[theorem]{Definition}
\newtheorem{rem}[theorem]{Remark}

\newcommand{\str}{\operatorname{str-exp}}

\newcommand{\ext}{\operatorname{ext}}

\newcommand{\N}{\mathbb{N}}

\newcommand{\Lin}{\mathcal{L}}

\newcommand{\F}{\mathcal{F}}
\newcommand{\eps}{\varepsilon}

\newcommand{\vertiii}[1]{{\left\vert\kern-0.25ex\left\vert\kern-0.25ex\left\vert #1 
		\right\vert\kern-0.25ex\right\vert\kern-0.25ex\right\vert}}

\DeclareMathOperator{\supp}{supp}
\DeclareMathOperator{\dist}{dist}

\DeclareMathOperator{\sgn}{sgn}

\DeclareMathOperator{\co}{co}

\DeclareMathOperator{\NA}{NA}

\DeclareMathOperator{\SNA}{SNA}

\DeclareMathOperator{\ASE}{ASE}
\DeclareMathOperator{\SE}{SE}

\DeclareMathOperator{\LipSE}{LipSE}

\newcommand{\Lip}{{\mathrm{Lip}}_0}
\newcommand{\Mol}{\operatorname{Mol}}
\renewcommand{\subset}{\subseteq}
\newcommand{\free}{\mathcal{F}}

\title{On absolute strong exposure for Lipschitz maps}
\author[Choi]{Geunsu Choi}
\address[Choi]{Department of Mathematics Education, Sunchon National University, 57922 Jeonnam, Republic of Korea \newline
\href{http://orcid.org/0000-0002-4321-1524}{ORCID: \texttt{0000-0002-4321-1524}}}
\email{\texttt{gschoi@scnu.ac.kr}}

\author[Jung]{Mingu Jung}
        \address[Jung]{Department of Mathematics \& Research Institute for Natural Sciences, Hanyang University, 04763 Seoul, Republic of Korea \newline
\href{https://orcid.org/0000-0003-2240-2855}{ORCID: \texttt{0000-0003-2240-2855}}}
\email{mingujung@hanyang.ac.kr}

\date{\today}

\begin{document}

\begin{abstract}
We introduce strongly exposing Lipschitz maps, a vector-valued extension of Weaver's peaking functions and a nonlinear analogue of absolutely strongly exposing operators. Our main result shows that a Lipschitz map is strongly exposing if and only if its canonical linearization is absolutely strongly exposing. This equivalence serves as a bridge between the linear and Lipschitz settings and enables us to transfer several results from the former to the latter. As applications, we establish norm-denseness and residuality results for strongly exposing Lipschitz maps, obtain an isomorphic characterization related to the denseness of strongly norm-attaining Lipschitz maps. We also investigate weak sequential denseness of strongly exposing Lipschitz maps. In particular, we prove that this property holds whenever the derived set of the underlying metric space is finite, while further examples show that, unlike for strongly norm-attaining Lipschitz maps, weak sequential denseness may fail beyond trivial cases.
\end{abstract}

\maketitle

\section{Introduction}

The study of norm-attaining operators has been one of the central topics in Banach space theory since the Bishop-Phelps theorem established the denseness of norm-attaining functionals in the dual of every Banach space. In contrast with the case of linear functionals, the corresponding denseness problem for bounded linear operators is considerably more delicate and depends in an essential way on the geometry of the spaces involved.

A fundamental contribution to this problem was made by Bourgain, who introduced the notion of \emph{absolutely strongly exposing operators}. This notion provides a strengthened form of norm-attainment. Roughly speaking, an operator is absolutely strongly exposing if it attains its norm at a point in such a way that every sequence which almost attains the norm is forced, up to the natural scalar ambiguity and after passing to subsequences, to converge to that point. Thus, absolutely strongly exposing operators ensure not only the fact that the norm is attained, but also the stability and uniqueness-like behavior of norm-attainment.

The purpose of this paper is to develop a Lipschitz counterpart of this Bourgain-type theory. Throughout the paper, $X,Y$ denote real Banach spaces, and $M$ denotes a pointed metric space with distinguished point $0$. Lipschitz maps from $M$ into $Y$ provide a natural nonlinear analogue of bounded linear operators.

Recall that a bounded linear operator $T$ in the space of all bounded linear operators $\Lin(X,Y)$ \emph{attains its norm} if the norm 
$$
\|T\| := \sup \{\|Tx\|: x \in X, \, \|x\|\leq 1\}
$$
is attained at some point in the closed unit ball $B_X$. The corresponding notion in the Lipschitz setting is that of \emph{strong norm attainment}: The Banach space of all Lipschitz maps $f:M\to Y$ vanishing at $0$, endowed with the norm
\[
\|f\|:=\sup \left\{ 
\frac{\|f(x)-f(y)\|}{d(x,y)} : {x\neq y\in M}\right\} ,
\]
is denoted by $\Lip(M,Y)$. A map $f\in\Lip(M,Y)$ is said to \emph{strongly attain its norm} if there exist distinct points $p,q\in M$ such that
\[
\|f\|
=
\frac{\|f(p)-f(q)\|}{d(p,q)}.
\]
The set of all such maps is denoted by $\SNA(M,Y)$. If $Y=\mathbb{R}$, we simply write $\Lip (M)$ and $\SNA(M)$ in place of $\Lip (M,\mathbb{R})$ and $\SNA(M,\mathbb{R})$, respectively.

A fundamental tool in this setting is the \emph{Lipschitz-free space} $\mathcal{F}(M)$, which provides a linearization of Lipschitz maps. More precisely, let $\delta_M$ be the canonical evaluation embedding given by $\delta_M (x) (f) : = f(x)$ for every $x \in M$ and $f \in \Lip (M)$. Every Lipschitz map $f:M\to Y$ admits a unique canonical linearization: 
\[
\begin{tikzcd}[column sep=large,row sep=large]
M
  \arrow[r,"f"]
  \arrow[d,"\delta_M"']
&
Y
  \arrow[d,"\delta_Y"]
\\
\mathcal F(M)
  \arrow[r,"\widehat f"']
  \arrow[ur,"T_f"']
&
\mathcal F(Y)
\end{tikzcd}
\]
where $T_f:\mathcal{F}(M)\to Y$ and $\widehat{f}:\mathcal{F}(M)\to\mathcal{F}(Y)$ are the unique bounded linear maps making the diagram commute. Moreover,
\[
\|f\| = \|T_f\|= \|\widehat{f}\|.
\]
In what follows, whenever the underlying metric space is clear from the context, we write $\delta_x$ instead of $\delta_M(x)$. Observe also that the strong norm-attainment of $f$ corresponds to norm-attainmnt of $T_f$ at a \emph{molecule} 
\[
m_{p,q}:=\frac{\delta_p-\delta_q}{d(p,q)}
\]
for $p \neq q$ in $M$. 
 The set of all molecules $m_{p,q}$ is denoted by $\Mol(M)$. For simplicity, we use the notation 
\[
f(m_{p,q}):=\frac{f(p)-f(q)}{d(p,q)}
\]
for $f \in \Lip(M,Y)$ and $m_{p,q}\in\Mol(M)$.

Recall that an operator $T \in \mathcal{L}(X,Y)$ is \emph{absolutely strongly exposing} if there exists $x_0 \in B_X$ such that every sequence $(x_n) \subseteq B_X$ with $\|Tx_n\|\to\|T\|$ admits a subsequence converging to $x_0$ or $-x_0$. We write $T \in \ASE(X,Y)$. Motivated by this notion, we introduce the following Lipschitz analogue: besides strong-norm attainment at a molecule, we additionally require every sequence of almost norming molecules to converge, up to sign, to the norming molecule.

\begin{definition} A Lipschitz map $f \in \Lip(M,Y)$ is said to
\textit{strongly expose} at $m_{p,q} \in \Mol(M)$, and write by $f \in \LipSE(M,Y)$, if whenever a sequence $(m_{p_n,q_n}) \subseteq \Mol(M)$ satisfies that $\|f(m_{p_n,q_n})\| \to \|f\|$, there exists a subsequence $(m_{p_{k_n},q_{k_n}})$ such that $ (m_{p_{k_n},q_{k_n}})$ converges to either $m_{p,q}$ or $-m_{p,q}$. 
\end{definition}

If $Y=\mathbb{R}$, we simply write $\LipSE(M)$ for $\LipSE(M,\mathbb{R})$. 
The restriction to molecules in the above definition is natural, since it is well known that every strongly exposed point of $B_{\mathcal{F}(M)}$ is a molecule \cite[Lemma 4.1]{GPR}.

In the scalar-valued setting, strongly exposing Lipschitz maps coincide with the classical peaking functions in the sense of Weaver \cite[Definition 2.4.1]{W}. Consequently, the class $\LipSE(M,Y)$ may be viewed as a vector-valued extension of peaking functions. Our first main result shows that this notion is precisely the Lipschitz counterpart of absolute strong exposure for linear operators. \begin{introthm}\label{theorem:Lip-linear}
Let $f\in \Lip (M,Y)$ be given. Then the following are equivalent.
\begin{enumerate}
    \itemsep0.25em
    \item[\textup{(a)}] $f \in \LipSE (M, Y)$; 
    \item[\textup{(b)}] $\widehat{f} \in \ASE (\free (M), \free (Y))$;
    \item[\textup{(c)}] $T_f \in \ASE (\free (M), Y)$.
\end{enumerate}
\end{introthm}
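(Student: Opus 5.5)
We prove the cycle (a)$\Rightarrow$(b)$\Rightarrow$(c)$\Rightarrow$(a). Throughout we assume $f\neq 0$; the case $f=0$ is degenerate and is checked directly from the definitions. The plan uses three tools: the barycenter map $\beta_Y:\free(Y)\to Y$, the description $B_{\free(M)}=\overline{\co}(\Mol(M))$, and the fact quoted from \cite[Lemma 4.1]{GPR} that strongly exposed points of $B_{\free(M)}$ are molecules.

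\emph{(b)$\Rightarrow$(c).} Let $\beta_Y$ be the norm-one linear map with $\beta_Y\circ\delta_Y=\mathrm{id}_Y$. Then $T_f=\beta_Y\circ\widehat f$, so $\|T_f\mu\|\le\|\widehat f\mu\|\le\|f\|$ for every $\mu\in B_{\free(M)}$. Hence any sequence $(\mu_n)\subseteq B_{\free(M)}$ with $\|T_f\mu_n\|\to\|f\|=\|T_f\|$ also satisfies $\|\widehat f\mu_n\|\to\|\widehat f\|$. By (b) some subsequence converges to $\pm x_0$, where $x_0$ is the point from the definition of $\widehat f\in\ASE$. It remains to see that $T_f$ attains its norm at $x_0$, which follows by continuity along the subsequence.

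\emph{(c)$\Rightarrow$(a).} Let $x_0$ be the point from (c). By continuity $\|T_fx_0\|=\|f\|$, so we may pick $y^*\in S_{Y^*}$ with $y^*(T_fx_0)=\|f\|$ and set $\varphi:=y^*\circ T_f$.
\begin{itemize}
\item Suppose $\varphi(\mu_n)\to\|\varphi\|=\|f\|$. Then $\|T_f\mu_n\|\to\|f\|$, so a subsequence converges to $\pm x_0$. The sign $-$ is impossible, since then $\varphi(\mu_{k_n})\to-\|f\|$.
\item Hence every subsequence of $(\mu_n)$ has a further subsequence converging to $x_0$, so $\mu_n\to x_0$. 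Thus $x_0$ is strongly exposed by $\varphi$, and by \cite[Lemma 4.1]{GPR} we get $x_0=m_{p,q}$.
\end{itemize}
Restricting the defining property of $T_f\in\ASE$ to sequences of molecules now gives exactly $f\in\LipSE(M,Y)$ at $m_{p,q}$.

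\emph{(a)$\Rightarrow$(b).} This is the main step. First we turn (a) into a uniform statement by a routine contradiction argument:
\[
\forall\,\eps>0\ \exists\,\eta>0:\quad m\in\Mol(M),\ \|f(m)\|>\|f\|-\eta\ \Longrightarrow\ \min\{\|m-m_{p,q}\|,\|m+m_{p,q}\|\}<\eps.
\]
Now let $(\mu_n)\subseteq B_{\free(M)}$ with $\|\widehat f\mu_n\|\to\|f\|$, and carry out the following steps.
\begin{enumerate}
\item Choose $g_n\in\Lip(Y)$ with $\|g_n\|=1$ and $\langle g_n,\widehat f\mu_n\rangle\to\|f\|$. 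Set $\varphi_n:=g_n\circ f\in\Lip(M)$, so that $\|\varphi_n\|\le\|f\|$, $\langle\varphi_n,\mu_n\rangle\to\|f\|$, and $|\varphi_n(m)|\le\|f(m)\|$ for every molecule $m$.
\item Approximate each $\mu_n$, up to $1/n$, by a finite convex combination $\sum_i\lambda_i m_i$ of molecules.
\item Since $\varphi_n(m_i)\le\|f\|$ for all $i$ while the average of the $\varphi_n(m_i)$ is close to $\|f\|$, a Markov-type estimate shows that all but a small total weight of the $m_i$ satisfy $\varphi_n(m_i)>\|f\|-\eta$. Each such $m_i$ has $\|f(m_i)\|>\|f\|-\eta$, so it is $\eps$-close to $m_{p,q}$ or to $-m_{p,q}$.
\item The sign is forced to be the same for all such $m_i$. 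Indeed, if $m_i\approx\sigma m_{p,q}$ with $\sigma=\pm1$, then $\sigma\varphi_n(m_{p,q})\approx\varphi_n(m_i)\approx\|f\|>0$. So $\sigma=s_n:=\operatorname{sgn}\varphi_n(m_{p,q})$, which is independent of $i$.
\item Consequently $\|\mu_n-s_nm_{p,q}\|$ is small: it is bounded by $\eps$ plus $2$ times the bad weight plus $1/n$.
\end{enumerate}
Pass to a subsequence on which $s_n$ is constant and let $\eps\to0$ diagonally. This gives $\mu_{k_n}\to\pm m_{p,q}$, and $\widehat f$ attains its norm at $m_{p,q}$. I expect the delicate points to be the uniform $\eps$–$\eta$ reformulation of (a) and the sign-consistency in step 4, which relies on $\|f\|>0$.
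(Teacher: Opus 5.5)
Your proof is correct, and for two of the three implications it follows the paper. (b)$\Rightarrow$(c) is identical: the barycenter map and $T_f=\beta_Y\circ\widehat f$. Your (a)$\Rightarrow$(b) is Lemma~\ref{lemma:co-extension} with $V=\Mol(M)$, unrolled into sequences. Your $g_n\in\Lip(Y)=\free(Y)^*$ play the role of $y_0^*$, and $\sgn\varphi_n(m_{p,q})$ plays the role of $\theta$. You also write out the $\eps$--$\eta$ uniformization of (a). The paper needs this to invoke Lemma~\ref{lemma:co-extension} but leaves it implicit.

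The genuine difference is in (c)$\Rightarrow$(a), namely in how the ASE point $x_0$ is shown to be a molecule.
\begin{itemize}
\item The paper takes molecules with $\|f(m_{p_n,q_n})\|\to\|f\|$ and extracts a subsequence converging to $\pm x_0$. It then uses that $\Mol(M)$ is sequentially norm closed \cite[Corollary 2.14]{GPPR}.
\item You norm $T_fx_0$ by some $y^*$, show that $y^*\circ T_f$ strongly exposes $x_0$, and invoke \cite[Lemma 4.1]{GPR}.
\end{itemize}
The paper's route is shorter and only needs that norm-limits of molecules are molecules. Yours rests on the deeper fact that strongly exposed points of $B_{\free(M)}$ are molecules. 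In exchange it shows directly that $T_f^*y^*$ strongly exposes $x_0$, which the paper later imports from \cite[Lemma 1.2]{JMR23} in Lemma~\ref{lem:easy_lemma}.

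The paper's printed proof of (c)$\Rightarrow$(a) starts from ``$\widehat f\in\ASE$'', so as written it is really (b)$\Rightarrow$(a). Your version starts from $T_f$ and genuinely closes the cycle. The paper's argument would transfer verbatim, since $\|T_f(m)\|=\|f(m)\|$ for molecules $m$.

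One small correction: $f=0$ is not ``checked directly from the definitions''; it is an actual exception. Take $M=\{0,x\}$. Every molecule is $\pm m_{x,0}$, so $0\in\LipSE(M,Y)$. But $B_{\free(M)}$ is a segment, and constant sequences in it show $0\notin\ASE(\free(M),Y)$. You should simply assume $f\neq 0$, as the paper tacitly does through the normalization $\|T\|=1$ in Lemma~\ref{lemma:co-extension}.
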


Let us comment that this equivalence is not only of independent interest but also the main tool in the sequel; it will be used repeatedly to derive several results on $\LipSE(M,Y)$ from corresponding results on absolutely strongly exposing linear operators on $\mathcal F(M)$.

We then investigate norm-denseness and residuality of strongly exposing Lipschitz maps. Extending known results for strongly norm-attaining Lipschitz maps, we obtain several density theorems for $\LipSE(M,Y)$ under geometric assumptions on $\mathcal F(M)$ and on the range space $Y$. We also provide an isomorphic characterization of the universal denseness phenomenon, answering a question raised in \cite{CGMR}.

Finally, we study weak sequential denseness of strongly exposing Lipschitz maps. While it is known that $\SNA(M)$ is weakly sequentially dense
in $\Lip(M)$ for every metric space $M$ (\cite[Theorem 4.1]{CCGMR}), the class $\LipSE(M)$ may even be trivial when $M$ is a length space (see (\ref{iff_strepx_Z2}) in Section \ref{section:fundamental}). Nevertheless, weak sequential denseness remains valid for a surprisingly large class of metric spaces. Our second main result shows that it holds whenever the set of accumulation points of the metric space is finite.

\begin{introthm}\label{theorem:derived_set}
If the derived set $M'$ is finite (possibly empty), then $\LipSE(M)$ is weakly sequentially dense in $\Lip(M)$.
\end{introthm}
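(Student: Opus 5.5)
Given $f\in\Lip(M)$, the plan is to build $f_n\in\LipSE(M)$ with $f_n\to f$ weakly, i.e.\ $\langle f_n,\mu\rangle\to\langle f,\mu\rangle$ for every $\mu\in\free(M)$, and $\sup_n\|f_n\|<\infty$. Since the sequence is bounded, it is enough to check convergence on a dense set of $\mu$, for instance finitely supported measures; more convenient still is to have $f_n\to f$ pointwise with uniformly bounded norms, as in the strategy of \cite[Theorem 4.1]{CCGMR} for $\SNA(M)$. I will therefore aim for a bounded sequence in $\LipSE(M)$ that converges to $f$ pointwise. The difficulty is that peaking demands a unique norming molecule up to sign, with every almost-norming sequence of molecules converging to it. I will get this by making $f_n$ peak very strongly at a single fixed pair $(p,q)$, and adding to $f$ a perturbation whose effect disappears pointwise.

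\textbf{Step 1 (the finitely many accumulation points).} Write $M'=\{a_1,\dots,a_k\}$. Each point of $M\setminus M'$ is isolated, although isolation is not uniform near the $a_i$. Choose two distinct isolated points $p,q$; if $M$ is finite, the result is trivial, since there $\Lip(M)$ is finite dimensional and $\LipSE$ is dense. The idea is to take
\[
f_n:=g_n+\lambda h,
\]
where $h$ peaks at $m_{p,q}$ with a large gap, $\lambda$ is a constant, and $g_n$ is a modification of $f$ with $\|g_n\|\le\|f\|$ such that $g_n\to f$ pointwise. Then $f_n$ does not converge to $f$. The fix is to move the peak: use pairs $(p_n,q_n)$ that escape, meaning $d(p_n,q_n)\to 0$ with $p_n,q_n\to a_1$, or that go off to infinity. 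I then set $f_n=f+\lambda\,\phi_n$, where $\phi_n$ is a bump of the form $\phi_n(x)=\max\{0,r_n-d(x,p_n)\}$. Such a bump has norm $1$, is attained only at molecules involving $p_n$, and tends to $0$ pointwise. If $M'$ is nonempty I use isolated points $p_n\to a_1$ with $r_n=d(p_n,M\setminus\{p_n\})$. If $M'=\emptyset$, then either $M$ is uniformly discrete off bounded sets or the points run to infinity, and I argue similarly in each case.

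\textbf{Step 2 (forcing the peak).} I want $\|f_n\|$ to be almost attained only at molecules $m_{p_n,x}$ with $x$ in a finite set, and I then perturb slightly to pick out a unique maximizer. Because $p_n$ is isolated, the molecules $m_{p_n,x}$ with $x\ne p_n$ do not accumulate at other molecules, except as $x\to a_j$ or $x\to\infty$, and there are only finitely many accumulation points. I will tilt $\phi_n$ by a small multiple of a function of $d(\cdot,p_n)$ so that the sup over $x$ is attained at a unique nearest neighbour $x=q_n$, with a definite gap. To pass from weak to genuine convergence of almost-norming sequences, I will use the criterion from Section~\ref{section:fundamental}: a sequence of molecules $m_{u_j,v_j}$ converges to $m_{p,q}$ iff $u_j\to p$ and $v_j\to q$, given that $p\ne q$ are fixed. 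I also need to rule out almost-norming molecules on which $f$ itself is nearly norming. For this I choose $\lambda$ large, say $\lambda=3\|f\|+1$, so that $\|f_n\|\ge \lambda-\|f\|>\|f\|+$gap, while molecules away from $p_n$ satisfy $|f_n(m)|\le\|f\|+\lambda\,|\phi_n(m)|$ with $|\phi_n(m)|$ small. Finiteness of $M'$ is what makes this estimate uniform: it gives control on molecules $m_{u,v}$ with $u,v$ both near some $a_j$, where the bump vanishes.

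\textbf{Main obstacle.} The delicate part is Step 2 with the tilt: proving that every almost-norming sequence has a subsequence converging to $\pm m_{p_n,q_n}$. This requires excluding sequences with $v_j\to a_j$ or $v_j\to\infty$, where the value $|\phi_n(m_{p_n,v_j})|\to r_n/d(p_n,a_j)$ might tie with the maximum. I would first try choosing $q_n$ to be a point realizing $d(p_n,M\setminus\{p_n\})$; if no such point exists, I would replace $p_n$ by a pair for which it does, which is possible by a compactness-type argument near $a_1$ using that $M'$ is finite. Ties at the limit points are then broken by the strict factor $r_n/d(p_n,a_j)<1$.
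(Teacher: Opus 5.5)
\textbf{Weak versus weak$^*$ convergence.} Your reduction in the first paragraph does not hold. Since $\Lip(M)=\F(M)^*$, testing against $\mu\in\F(M)$ (equivalently, bounded pointwise convergence) only gives weak$^*$ convergence. The theorem is about the weak topology of the Banach space $\Lip(M)$. Already for $M=\mathbb N$, where $M'=\emptyset$, this fails. The $1$-Lipschitz functions $f_n(k)=\max\{k-n,0\}$ tend to $0$ pointwise. Under the isometry $\Lip(\mathbb N)\cong\ell_\infty$, $f\mapsto (f(k+1)-f(k))_k$, they correspond to $\chi_{[n,\infty)}$, on which every Banach limit equals $1$, so they are not weakly null. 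The tool that does give weak convergence, and the one the paper uses, is \cite[Lemma 1.5]{CCGMR}: a bounded sequence with pairwise disjoint supports is weakly null. Your one-point bumps $\phi_n=r_n\chi_{\{p_n\}}$ happen to be disjointly supported, so this part can be rescued. That only works if every further modification (such as your tilt) is also disjointly supported or tends to $0$ in norm.

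\textbf{The main gap is Step 2.} Nothing you write shows that $f+\lambda\phi_n$, tilted or not, lies in $\LipSE(M)$, and the proposed mechanism cannot work in general. The almost norming molecules are the $m_{p_n,x}$ with $d(p_n,x)$ below a fixed multiple of $r_n$. On them $f_n(m_{p_n,x})=f(m_{p_n,x})+\lambda r_n/d(p_n,x)$, where the first term is an arbitrary number in $[-\|f\|,\|f\|]$, of the same order as $\lambda=3\|f\|+1$. So which $x$ (if any) maximizes depends on $f$ as much as on the bump. Several things can go wrong:
\begin{itemize}
\item The candidate set may be infinite with no convergent subsequence, since $M$ need not be proper even when $M'=\emptyset$.
\item It may contain points tending to some $a_j$ with $d(p_n,a_j)$ close to $r_n$, so the factor $r_n/d(p_n,a_j)<1$ does not beat the oscillation of $f$.
\item Nearest neighbours need not exist anywhere; think of scaled clusters with $d(x_i,x_j)=c(1+\frac1i+\frac1j)$.
\item A small tilt depending only on $d(\cdot,p_n)$ can neither separate equidistant points nor dominate an oscillation of size $\|f\|$.
\end{itemize}
Moreover, membership in $\LipSE(M)$ forces the norming molecule to be strongly exposed in $B_{\F(M)}$, and you never arrange this.

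\textbf{The missing idea} is to avoid building the peak by hand. The paper proceeds as follows.
\begin{itemize}
\item It finds strongly exposed molecules $m_{u_n,v_n}$ via the property (Z) characterization and the Closer Pair Lemma~\ref{lem:making_points_closer}. Finiteness of $M'$ is exactly what forces that iteration to terminate.
\item It places them in pairwise disjoint balls $B(u_n,R_n)$ with $\sup_n d(u_n,v_n)/R_n<1$.
\item It removes a neighbourhood of $\{u_n,v_n\}$, prescribes values at $u_n,v_n$ with a strict slope gap, and McShane-extends. This gives $g_n\in\SNA(M)$ at $m_{u_n,v_n}$ with $g_n-g$ bounded and disjointly supported (Theorem~\ref{theorem:peak-ball-weak-dense} and Corollary~\ref{cor:disjoint-ball-p_n}).
\item Lemma~\ref{lemma:LipSE-ASE}, i.e.\ adding a small multiple of the functional strongly exposing $m_{u_n,v_n}$, yields elements of $\LipSE(M)$ norm-close to $g_n$.
\end{itemize}
In this way the convergence of almost norming sequences is inherited from strong exposedness rather than proved directly. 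Where no such disjoint configuration exists, the paper instead uses that $\F(M)$ has the RNP, which gives norm density. These cases are: uniformly discrete $M$, compact countable $M$, or isolated points near every $a_i$ being separated by at least a fixed multiple of their distance to $a_i$ (the case $\liminf_n n\alpha_n^i>0$ for all $i$).
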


This theorem is one of the positive results showing that weak sequential denseness of strongly exposing Lipschitz maps often parallels the corresponding theory for strongly norm-attaining Lipschitz maps. However, the analogy is not complete. We also obtain further denseness results (see Proposition \ref{prop:isometric_copy_[0,1]} and Theorem \ref{theorem:peak-ball-weak-dense}), while exhibiting metric spaces, even among non-length spaces, for which weak sequential denseness fails (see Corollary \ref{cor:length-finite}, Theorem \ref{theorem:compact-proper-length} and Theorem \ref{theorem:str-exp-compact} for instance). Thus, in contrast with the case of strong norm-attainment, weak sequential exhibits a substantially different behavior from the corresponding theory of strongly norm-attaining Lipschitz maps.

\section{Fundamental results on strongly exposing Lipschitz maps}\label{section:fundamental}

In this section, we concentrate on fundamental results concerning strongly exposing Lipschitz maps aforementioned in the introduction, starting with the proof of Theorem \ref{theorem:Lip-linear}.

\subsection{Correspondence with bounded linear operators}

In order to prove Theorem \ref{theorem:Lip-linear}, we need the following preliminary lemma which allows that, whenever $B_X = \overline{\co} (V)$, the verification of the ASE property may be reduced to sequences in the generating set $V$. 

\begin{lemma}\label{lemma:co-extension}
Let $X$ and $Y$ be Banach spaces with $B_X = \overline{\co} (V)$ for some $V \subseteq S_X$. Let $T \in \Lin(X,Y)$ be such that $\|Tv_0\|=\|T\|=1$ for some $v_0 \in V$.  
Suppose that for every $\eps>0$, there exists $\delta>0$ such that whenever $v \in V$ satisfies that $\|Tv\| > 1-\delta$, there exists $\theta \in \{-1,1\}$ such that $\|v-\theta v_0\|<\eps$. Then $T \in \ASE(X,Y)$ at $v_0$.
\end{lemma}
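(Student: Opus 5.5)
The plan is to show the quantitative statement directly: for every $\eps>0$ there is $\eta>0$ such that every $x\in B_X$ with $\|Tx\|>1-\eta$ satisfies $\|x-\theta v_0\|<3\eps$ for some $\theta\in\{-1,1\}$. From this the ASE property follows at once. Indeed, if $(x_n)\subseteq B_X$ and $\|Tx_n\|\to 1$, then for each $k$ all but finitely many $x_n$ lie within $3/k$ of $v_0$ or of $-v_0$. Hence infinitely many of the $x_n$ lie near one fixed sign for infinitely many $k$, and a diagonal argument yields a subsequence converging to $v_0$ or to $-v_0$.

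Fix $\eps\in(0,1/2)$, take $\delta\le 1/2$ from the hypothesis, and set $\eta:=\delta\eps$. Let $x\in B_X$ with $\|Tx\|>1-\eta$, and choose $y^*\in S_{Y^*}$ with $y^*(Tx)>1-\eta$. Since $B_X=\overline{\co}(V)$, pick a convex combination $z=\sum_i\lambda_i v_i$ with $v_i\in V$ and $\|x-z\|<\eta$. Then $y^*(Tz)>1-2\eta$, because $\|T\|=1$.

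Split the indices into \emph{good} ones, where $y^*(Tv_i)>1-\delta$, and \emph{bad} ones. Each $1-y^*(Tv_i)$ is nonnegative and
\[
\sum_i\lambda_i\bigl(1-y^*(Tv_i)\bigr)<2\eta,
\]
so the bad indices carry total weight less than $2\eta/\delta=2\eps$. For each good $i$ we have $\|Tv_i\|>1-\delta$, so there is $\theta_i\in\{-1,1\}$ with $\|v_i-\theta_iv_0\|<\eps$.

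The key step, and the only real obstacle, is that the signs $\theta_i$ might a priori differ among the good indices. We exclude this by testing against $y^*$: $|y^*(Tv_i)-\theta_i y^*(Tv_0)|\le\|v_i-\theta_iv_0\|<\eps$. Hence $\theta_i\,y^*(Tv_0)>1-\delta-\eps>0$, which forces $\theta_i=\sgn y^*(Tv_0)=:\theta$ for every good $i$.

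Finally, estimate
\[
\|x-\theta v_0\|\le\|x-z\|+\sum_{\text{good}}\lambda_i\|v_i-\theta v_0\|+\sum_{\text{bad}}\lambda_i\|v_i-\theta v_0\|<\eta+\eps+2\cdot2\eps,
\]
which is a fixed multiple of $\eps$. Rescaling $\eps$ gives the claimed quantitative statement, and the first paragraph then gives $T\in\ASE(X,Y)$ at $v_0$.
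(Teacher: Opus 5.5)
Your proof is correct and follows essentially the same route as the paper's. Both arguments approximate $x$ by a convex combination of elements of $V$, use a norming functional to show that the ``bad'' indices carry small total weight, and use that same functional to force a common sign on the ``good'' indices; the paper fixes $\theta$ with $y_0^*(T(\theta v_0))\ge 0$ and rules out $\|x_i+\theta v_0\|<\eps$, which is your sign argument. The remaining differences are cosmetic: you norm $Tx$ rather than $Tz$, you take $\eta=\delta\eps$ instead of $\delta^2$, and you explicitly extract the subsequence from the uniform estimate, a step the paper leaves implicit.
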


\begin{proof}
Given $0<\eps<1$, find $0<\delta<\min\{\eps,1-\eps\}$ such that whenever $v \in V$ satisfies that $\|Tv\|>1-\delta$, we have that $\|v-v_0\| < \varepsilon$ or $\|v+v_0\| < \varepsilon.$

Suppose that $x \in B_X$ satisfies $\|Tx\| > 1 - \delta^2$. Choose $z \in \co V$ such that $\|x-z\|<\delta$ and $\|Tz\| > 1-\delta^2$, and write $z =  \sum_{i=1}^n \lambda_i x_i$, where $\lambda_1,\ldots,\lambda_n \geq0$ with $\sum_{i=1}^n \lambda_i=1$ and $x_1,\ldots,x_n \in V$. Let $y_0^* \in S_{Y^*}$ such that $y_0^*(Tz) = \|Tz\| > 1-\delta$. Let $\theta \in \{-1,1\}$ be such that $y_0^*(T (\theta v_0) ) \geq 0$. Define
$$
G := \{i \in \{1, \ldots, n\} : y_0^*(Tx_i) > 1- \delta\},
$$
$$
B := \{i \in \{1, \ldots, n\} : y_0^*(Tx_i) \leq 1 - \delta\}.
$$
It follows that if $i \in G$, then we have either $\|x_i- \theta v_0\| < \eps$ or $\|x_i+ \theta v_0\|< \eps$. Note that if $\|x_i + \theta v_0 \| < \eps$, then 
\[
\varepsilon<1-\delta < y_0^* (Tx_i)  \leq y_0^* (T (x_i + \theta v_0) ) \leq \|x_i + \theta v_0\| < \varepsilon,
\]
which is a contradiction. Thus, $\|x_i - \theta v_0\| < \varepsilon$ for each $i \in G$.

Now, we have
\begin{align*}
1- \delta^2 < \|Tz\| = y_0^*(Tz) &\leq \sum_{i \in G} \lambda_i y_0^*(Tx_i) + \sum_{i \in B} \lambda_i y_0^*(Tx_i) \\
&\leq \sum_{i \in G} \lambda_i + (1-\delta) \sum_{i \in B} \lambda_i = 1 - \delta \sum_{i \in B} \lambda_i,
\end{align*}
we implies that $\sum_{i \in B} \lambda_i < \delta$. Thus we obtain
\begin{align*}
\|x- \theta v_0 \| &\leq \|x-z\| + \sum_{i \in G} \lambda_i \|x_i - \theta v_0 \| + \sum_{i \in B} \lambda_i \|x_i - \theta v_0 \| \\
&< \delta + \eps  \sum_{i\in G} \lambda_i + 2\sum_{i \in B} \lambda_i    \\
&< \delta+\eps + 2 \delta < 4 \varepsilon.
\end{align*}
This completes the proof. 
\end{proof}

Before proving Theorem \ref{theorem:Lip-linear}, recall the canonical \emph{barycenter map} $\beta_Y : \mathcal{F}(Y) \to Y$ which is the unique contractive linear operator satisfying $\beta_Y (\delta_Y(y)) = y$ for every $y \in Y$ (\cite[p.~124]{GK}).

\begin{proof}[Proof of Theorem \ref{theorem:Lip-linear}]
(a)$\implies$(b). Suppose that $f \in \LipSE(M,Y)$ at $m_{p,q}$, and let $(m_{p_n,q_n}) \subseteq \Mol (M)$ be such that $ \|\widehat{f}(m_{p_n,q_n})\| \to \|\widehat{f}\| = \|f\|$. Note that 
\begin{equation}\label{eq:hat_f_molecule}
    \|\widehat{f}(m_{p_n,q_n})\|= \left\| \frac{\delta_{f(p_n)} - \delta_{f(q_n)}}  {d(p_n,q_n)} \right\| =\frac{\|f(p_n)-f(q_n)\|}  {d(p_n,q_n)} = \|f(m_{p_n,q_n})\| \to \|f\|.
\end{equation}
By the strong exposure of $m_{p,q}$, the sequence $(m_{p_n,q_n})$ admits a convergent subsequence converging to $m_{p,q}$ or $m_{q,p}$.
Applying Lemma \ref{lemma:co-extension} to $\widehat{f}:\mathcal{F}(M)\to\mathcal{F}(Y)$ with $V = \Mol(M)$, we conclude that $\widehat{f} \in \ASE(\free(M),\free(Y))$ at $m_{p,q}$. 

(b)$\implies$(c). This follows immediately from the existence of the barycenter map $\beta_Y : \free (Y) \to Y$ and $T_f = \beta_Y \circ \widehat{f}$. In fact, if $\mu \in B_{\mathcal{F}(M)}$, then  
\[
\|T_f (\mu)\| = \|(\beta_Y \circ \widehat{f} ) (\mu)\| \leq \|\widehat{f} (\mu)\| \leq \|\widehat{f}\|=\|f\|. 
\]
Thus, if $\|T_f (\mu_n)\| \to \|T_f\|$ for some $(\mu_n)$ in $B_{\free(M)}$, then $\| \widehat{f} (\mu_n)\| \to \|\widehat{f}\|$. Therefore, if $\widehat{f}$ is absolutely strongly exposing, then so is $T_f$.

(c)$\implies$(a). suppose that $\widehat{f} \in \ASE(\free(M),\free(Y))$ at $\mu_0 \in S_{\free(M)}$. If $(m_{p_n,q_n}) \subseteq \Mol(M)$ is a sequence such that $ \|f(m_{p_n,q_n})\| \to \|f\|$, then the calculation in \eqref{eq:hat_f_molecule} shows that $\|\widehat{f} (m_{p_n,q_n}) \|\to \|\widehat{f}\|$. It follows that there exists $\theta \in \{-1,1\}$ and a subsequence $(m_{p_{k_n},q_{k_n}})$ of $(m_{p_n,q_n})$ such that $m_{p_{k_n}, q_{k_n}} \to \theta  \mu_0$. As $\Mol(M)$ is sequentially norm closed (see \cite[Corollary 2.14]{GPPR}), $\mu_0 = m_{p,q}$ for some $p, q \in M$, with $p_{k_n} \to p$ and $q_{k_n} \to q$. This proves that $f \in \LipSE (M,Y)$ at $m_{p,q}$. 
\end{proof}

\subsection{Metric characterizations}

Recall from \cite{IKW} that a pair of distinct points $p,q \in M$ is said to have \textit{property \textup{(Z)}} if for every $\eps>0$, there exists $r \in M \setminus \{p,q\}$ such that
$$
d(p,r)+d(r,q) - d(p,q) \leq \eps \min \{ d(p,r), d(r,q)\}.
$$
In \cite[Theorem 5.4]{GPR}, it is shown that for a complete metric space $M$ and $p \neq q$ in $M$, 
\begin{equation}\label{iff_strexp_Z}
m_{p,q} \in \str (B_{\mathcal{F}(M)}) \,\, \iff \,\, \text{$(p,q)$ does not have the property \textup{(Z)}}.
    \end{equation}
Recall also that $M$ is said to have the property \textup{(Z)} if each pair of distinct points of $M$ has the property \textup{(Z)}. It is proved in \cite[Theorem 1.5]{AM} that property \textup{(Z)} is equivalent to $M$ being a length space. Combining this with \eqref{iff_strexp_Z}, we observe that 
\begin{align}\label{iff_strepx_Z2}
    \LipSE(M) = \{0\}  &\iff  \text{$M$ has the property \textup{(Z)}} \iff \text{$M$ is length},
\end{align}
since strongly exposing Lipschitz functions must strongly attain their norm at a pair $(p,q)$ such that $m_{p,q} \in \text{str-exp}(B_{\F(M)})$ by definition. This equivalence admits a natural vector-valued extension.

\begin{lemma}\label{lem:easy_lemma}
If $f \in \LipSE (M,Y)$ at $m_{p,q}$ for some $(p,q)\in \widetilde{M}$ and $y^* \in S_{Y^*}$ satisfies that $y^* (T_f (m_{p,q})) = \|f\|_L$, then $y^* \circ f \in \LipSE (M)$.    
\end{lemma}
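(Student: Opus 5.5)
The plan is to argue directly from the definition of $\LipSE(M)$, using only that $y^*$ is norm-one and that it norms $f$ at $m_{p,q}$. Write $g := y^* \circ f \in \Lip(M)$.

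First we identify the norm of $g$. For every molecule $m_{x,z}$ we have
\[
|g(m_{x,z})| = |y^*(f(m_{x,z}))| \le \|f(m_{x,z})\| \le \|f\|,
\]
so $\|g\| \le \|f\|$. On the other hand,
\[
g(m_{p,q}) = y^*(T_f(m_{p,q})) = \|f\|.
\]
Hence $\|g\| = \|f\|$, and $g$ strongly attains its norm at $m_{p,q}$. If $f = 0$, then $g=0$; the case $f=0$ is degenerate, and we treat it under whatever convention for $f$ the definition already allows, since the argument below applies verbatim.

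Next we take a sequence $(m_{p_n,q_n}) \subseteq \Mol(M)$ with $|g(m_{p_n,q_n})| \to \|g\| = \|f\|$. The inequality displayed above gives
\[
|g(m_{p_n,q_n})| \le \|f(m_{p_n,q_n})\| \le \|f\|,
\]
so the squeeze forces $\|f(m_{p_n,q_n})\| \to \|f\|$. Since $f \in \LipSE(M,Y)$ at $m_{p,q}$, some subsequence converges to $m_{p,q}$ or to $-m_{p,q} = m_{q,p}$. This is exactly the condition defining $g \in \LipSE(M)$ at $m_{p,q}$.

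There is essentially no obstacle. The only points needing care are that the definition of $\LipSE$ concerns absolute values (norms) of $f(m_{p_n,q_n})$, so sign issues for $g$ cause no trouble, and that the hypothesis $y^*(T_f(m_{p,q})) = \|f\|$ is used only to guarantee $\|g\| = \|f\|$. Without that equality the squeeze step would fail.
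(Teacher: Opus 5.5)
Your proof is correct, and it takes a more direct route than the paper.

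The paper argues through the linear theory in three steps:
\begin{enumerate}
\item It uses Theorem~\ref{theorem:Lip-linear} to get $T_f \in \ASE(\mathcal{F}(M),Y)$ at $m_{p,q}$.
\item It invokes the operator fact \cite[Lemma 1.2]{JMR23}: if $T$ is absolutely strongly exposing at $x_0$ and $y^*$ norms $Tx_0$, then $T^*y^*$ is a strongly exposing functional.
\item It applies Theorem~\ref{theorem:Lip-linear} again, in the scalar case, to conclude that $y^*\circ f = T_f^*y^*$ lies in $\LipSE(M)$.
\end{enumerate}

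You instead apply the squeeze $|y^*(f(m))| \le \|f(m)\| \le \|f\|$ directly on molecules. This is essentially the proof of the cited linear lemma, transplanted to $\Mol(M)$.

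Since the definition of $\LipSE$ only quantifies over sequences of molecules, you never need to control arbitrary elements of $B_{\mathcal{F}(M)}$. The step that requires this in the paper's route is the implication (a)$\implies$(b)/(c) of Theorem~\ref{theorem:Lip-linear}, which relies on the convex-hull argument of Lemma~\ref{lemma:co-extension}; your argument bypasses it entirely and is self-contained. It also makes explicit that $y^*\circ f$ exposes the same molecule $m_{p,q}$. Moreover, it works verbatim under the weaker hypothesis $|y^*(T_f(m_{p,q}))| = \|f\|$.

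Your treatment of the degenerate case $f=0$ is also fine. In that case the hypothesis already forces every sequence of molecules to have a subsequence converging to $\pm m_{p,q}$, and this transfers to $g=0$.

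The paper's version gains nothing mathematically here. Its advantage is that it is a two-line reduction once Theorem~\ref{theorem:Lip-linear} is available, and it illustrates the linear--Lipschitz transfer principle used throughout the paper.
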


\begin{proof}
    By Theorem \ref{theorem:Lip-linear}, $T_f \in \ASE (\mathcal{F}(M), Y)$ at $m_{p,q}$. Thus, \cite[Lemma 1.2]{JMR23} (see also \cite[Proposition 3.14]{CGMR}) shows that $T_{y^* \circ f} = T_f^* y^* \in \SE (\mathcal{F}(M))$. It follows again from Theorem \ref{theorem:Lip-linear} that $y^* \circ f \in \LipSE(M)$.
\end{proof}

\begin{prop}
    Let $M$ be a complete metric space. The following are equivalent.
\begin{enumerate}

\item[\textup{(a)}] $M$ is length.
\item[\textup{(b)}] $\LipSE(M,Y) = \{0\}$ for every Banach space $Y$.
\item[\textup{(c)}] There exists a nontrivial Banach space $Y$ such that $\LipSE(M,Y) = \{0\}$.
\end{enumerate}
\end{prop}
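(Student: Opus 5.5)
We prove the cycle (a)$\Rightarrow$(b)$\Rightarrow$(c)$\Rightarrow$(a). The middle implication is trivial: take $Y=\mathbb{R}$, or any nontrivial space. The other two reduce, via Lemma \ref{lem:easy_lemma} and a tensoring construction, to the scalar equivalence \eqref{iff_strepx_Z2}.

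\emph{(a)$\Rightarrow$(b).} Suppose $M$ is length and $0\neq f\in\LipSE(M,Y)$, strongly exposing at $m_{p,q}$. Then $\|T_f(m_{p,q})\|=\|f\|>0$. By Hahn--Banach choose $y^*\in S_{Y^*}$ with $y^*(T_f(m_{p,q}))=\|f\|$. Lemma \ref{lem:easy_lemma} gives $y^*\circ f\in\LipSE(M)$. Moreover $y^*\circ f\neq 0$, since it takes the value $\|f\|>0$ at $m_{p,q}$. This contradicts \eqref{iff_strepx_Z2}, which says $\LipSE(M)=\{0\}$ for length $M$. Hence $\LipSE(M,Y)=\{0\}$.

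\emph{(c)$\Rightarrow$(a).} We argue by contraposition. Suppose $M$ is not length, and let $Y\neq\{0\}$ be arbitrary. By \eqref{iff_strepx_Z2} there is $0\neq g\in\LipSE(M)$, strongly exposing at some $m_{p,q}$. Fix $e\in S_Y$ and set $f:=g(\cdot)e\in\Lip(M,Y)$. Then
\[
\|f(m_{p_n,q_n})\|=|g(m_{p_n,q_n})| \quad\text{and}\quad \|f\|=\|g\|.
\]
So any sequence of molecules almost norming $f$ also almost norms $g$. Since $g$ strongly exposes at $m_{p,q}$, such a sequence has a subsequence converging to $\pm m_{p,q}$. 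Therefore $f\in\LipSE(M,Y)\setminus\{0\}$, so (c) fails for every nontrivial $Y$.

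The only point needing care is that \eqref{iff_strepx_Z2} is stated for scalar functions and rests on \eqref{iff_strexp_Z}, which requires completeness; this is why $M$ is assumed complete. One should also check the direction ``$M$ not length $\Rightarrow \LipSE(M)\neq\{0\}$''. It follows because a pair $(p,q)$ failing property (Z) gives a strongly exposed molecule $m_{p,q}$. A strongly exposing functional for $m_{p,q}$ is some $g\in\Lip(M)=\mathcal F(M)^*$, and then $T_g\in\SE(\mathcal F(M))=\ASE(\mathcal F(M),\mathbb{R})$. Theorem \ref{theorem:Lip-linear} then yields $g\in\LipSE(M)$. Beyond this, I expect no substantial obstacle; the argument is essentially bookkeeping built on Lemma \ref{lem:easy_lemma}.
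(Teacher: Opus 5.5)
Your proof is correct and rests on the same two ingredients as the paper's one-line argument: the scalar equivalence \eqref{iff_strepx_Z2} and Lemma \ref{lem:easy_lemma}. Your write-up is also more precise about where each ingredient enters. The lemma drives (a)$\Rightarrow$(b), whereas (c)$\Rightarrow$(a) needs the elementary lifting $g\mapsto g(\cdot)e$; the paper attributes the lemma to (c)$\Rightarrow$(a) and leaves that lifting implicit.
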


\begin{proof}
The implication (c)$\implies$(a), which is the only non-trivial one, follows from \eqref{iff_strepx_Z2} and Lemma \ref{lem:easy_lemma}. 
\end{proof}

\subsection{Denseness results from the geometry of $\F(M)$}\label{section:denseness}

We start with a direct consequence of Theorem \ref{theorem:Lip-linear}. Several situations in which the set of absolutely strongly exposing operators is dense are recorded in \cite[Proposition 4.2]{CGMR}. By the correspondence established in Theorem \ref{theorem:Lip-linear}, we obtain the following denseness result for strongly exposing Lipschitz maps.

\begin{cor}\label{cor:universal-domain-dense}
Suppose that one of the following holds:
\begin{enumerate}
\item[\textup{(i)}] $\mathcal{F}(M)$ has the \textup{RNP}.
\item[\textup{(ii)}] $B_{\mathcal{F}(M)}$ is the closed convex hull of a set of uniformly strongly exposing points.
\item[\textup{(iii)}] $\mathcal{F}(M)$ has property quasi-$\alpha$.
\end{enumerate}
Then, $\LipSE(M,Y)$ is dense in $\Lip(M,Y)$ for every Banach space $Y$.
\end{cor}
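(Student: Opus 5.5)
The plan is to transfer the known operator-theoretic density result to the Lipschitz setting through the isometric identification $f\mapsto T_f$ and Theorem \ref{theorem:Lip-linear}. The map $f \mapsto T_f$ is a linear isometry of $\Lip(M,Y)$ onto $\Lin(\free(M),Y)$. This is the universal property of the Lipschitz-free space: every bounded operator $T:\free(M)\to Y$ is of the form $T_f$ with $f = T\circ\delta_M$, and $\|f\| = \|T_f\|$. By Theorem \ref{theorem:Lip-linear}, (a)$\iff$(c), this isometry carries $\LipSE(M,Y)$ exactly onto $\ASE(\free(M),Y)$.

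Consequently, density of $\LipSE(M,Y)$ in $\Lip(M,Y)$ is equivalent to density of $\ASE(\free(M),Y)$ in $\Lin(\free(M),Y)$. It therefore suffices to verify the latter under each of the hypotheses (i)--(iii) with $X=\free(M)$.

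For this I would invoke \cite[Proposition 4.2]{CGMR}. It records that $\ASE(X,Y)$ is dense in $\Lin(X,Y)$ for every Banach space $Y$ whenever one of the following holds:
\begin{enumerate}
\item[\textup{(i)}] $X$ has the RNP, by Bourgain's theorem, where the perturbations produced are absolutely strongly exposing;
\item[\textup{(ii)}] $B_X$ is the closed convex hull of a set of uniformly strongly exposed points, by Lindenstrauss' argument for property $\alpha$-type sets, adapted by Schachermayer;
\item[\textup{(iii)}] $X$ has property quasi-$\alpha$, by the Choi--Song argument.
\end{enumerate}
Applying this with $X=\free(M)$ and pulling back through $T\mapsto T\circ\delta_M$ gives the corollary.

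The only point needing care is that the cited results yield absolutely strongly exposing operators, not merely norm-attaining ones. In each classical proof the perturbed operator $T+S$ is built so that almost-norming sequences in $B_X$ converge up to sign to a fixed strongly exposed point. This is precisely the ASE property, and it is the content recorded in \cite[Proposition 4.2]{CGMR}. I expect no other obstacle, since the correspondence between the Lipschitz and linear settings is handled entirely by Theorem \ref{theorem:Lip-linear}.
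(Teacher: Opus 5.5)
Your proposal is correct and follows the same route as the paper. The paper takes the density of $\ASE(\free(M),Y)$ in $\Lin(\free(M),Y)$ under each of (i)--(iii) from \cite[Proposition 4.2]{CGMR}, and transfers it to $\LipSE(M,Y)$ through the isometry $f\mapsto T_f$ and Theorem \ref{theorem:Lip-linear}; the historical attributions you sketch for (i)--(iii) are not needed, since the paper relies only on that citation.
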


\begin{proof}
In each case, it is known that the set of $\ASE (\mathcal{F}(M),Y)$ is dense for every Banach space $Y$. The conclusion now follows from Theorem \ref{theorem:Lip-linear}.
\end{proof}

A useful method used in \cite{CGMR} to obtain ASE operators is to consider bounded
linear operators that attain their norm at strongly exposed points. Concretely, if
$T \in \Lin(X, Y )$ attains its norm at $x_0 \in \str (B_X)$ and $\varepsilon > 0$, then there exists
$S \in \ASE (X, Y )$ such that $\|Sx_0\|=\|S\|$ and $\|S-T\|  < \varepsilon$. This observation can be applied to the setting of Lipschitz functions as follows: 

\begin{lemma}\label{lemma:LipSE-ASE}
    If $f \in \SNA(M,Y)$ at $m_{p,q}\in\str( B_{\mathcal{F}(M)})$ and $\varepsilon>0$, then there exists $g \in \LipSE (M, Y)$ such that $\|g (m_{p,q})\| = \|g\|$ and $\|f-g\|<\varepsilon$.
\end{lemma}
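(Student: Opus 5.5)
The plan is to pass through the linearization $T_f$, use the operator-level perturbation result recalled just before the statement, and then return to the Lipschitz setting via the identification $\Lip(M,Y) \cong \Lin(\F(M),Y)$ and Theorem \ref{theorem:Lip-linear}.

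First, I would check that $T_f$ attains its norm at $m_{p,q}$. Since $f \in \SNA(M,Y)$ at $m_{p,q}$, we have $\|T_f(m_{p,q})\| = \|f(m_{p,q})\| = \|f\| = \|T_f\|$. By hypothesis, $m_{p,q} \in \str(B_{\F(M)})$. We may assume $f \neq 0$; if $f=0$, then $g=0$ trivially works, since $0 \in \LipSE$ is excluded only if $M$ is length, and in that case $\str(B_{\F(M)})$ is empty.

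Second, I would apply the observation from \cite{CGMR} recalled above to $T = T_f$, $x_0 = m_{p,q}$ and the given $\varepsilon$. This yields $S \in \ASE(\F(M),Y)$ with $\|S m_{p,q}\| = \|S\|$ and $\|S - T_f\| < \varepsilon$.

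Third, I would use the isometric identification $\Lip(M,Y) \cong \Lin(\F(M),Y)$, $g \mapsto T_g$. Define $g := S \circ \delta_M \in \Lip(M,Y)$. Then $T_g = S$, because both are bounded and agree on the span of $\delta_M(M)$, which is dense. Consequently $\|f - g\| = \|T_f - S\| < \varepsilon$, and $\|g(m_{p,q})\| = \|S m_{p,q}\| = \|S\| = \|g\|$.

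Finally, since $T_g = S \in \ASE(\F(M),Y)$, the implication (c)$\implies$(a) of Theorem \ref{theorem:Lip-linear} gives $g \in \LipSE(M,Y)$. Its proof shows that the exposed element is the point where $S$ is ASE, namely $m_{p,q}$.

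The one point needing care is that the cited perturbation result produces an $S$ that is ASE precisely at $m_{p,q}$ and not merely at some other point. If the cited statement only gives norm attainment at $x_0$, I would note that an ASE operator attaining its norm at $x_0$ must be ASE at $\pm x_0$: apply the ASE definition to the constant sequence $(x_0)$, which forces $x_0 = \pm$ the exposed point. So no real obstacle remains.
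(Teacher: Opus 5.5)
Your proposal is correct and follows the paper's proof essentially step for step: pass to $T_f$, which attains its norm at the strongly exposed molecule $m_{p,q}$, invoke the operator-level perturbation from \cite[Proposition 3.14]{CGMR} to obtain an ASE operator $S=T_g$ at $m_{p,q}$ within $\varepsilon$ of $T_f$, and return to $\Lip(M,Y)$ via the isometry $g\mapsto T_g$ and Theorem~\ref{theorem:Lip-linear}. The separate treatment of $f=0$ is unnecessary, and it rests on a convention about whether $0\in\LipSE(M,Y)$; the perturbation result already covers $T_f=0$, for instance via $S=\tfrac{\varepsilon}{2}\,x^*\otimes y_0$ with $x^*$ strongly exposing $m_{p,q}$ and $y_0\in S_Y$.
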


\begin{proof}
If $f \in \SNA(M,Y)$ at $m_{p,q} \in \str(B_{\mathcal{F}(M)})$, then it follows that $T_f \in \NA(\mathcal{F}(M),Y)$ at $m_{p,q}$. Given $\eps>0$, the proof in \cite[Proposition 3.14]{CGMR} shows that there exists $T_g \in \ASE(\mathcal{F}(M),Y)$ which strongly exposes $m_{p,q}$ for some $g \in \Lip(M,Y)$ with $\|T_g-T_f\|<\eps$. By Theorem \ref{theorem:Lip-linear} and the isometric correspondence between $\Lip(M,Y)$ and $\Lin(\mathcal{F}(M),Y)$, we obtain $g \in \LipSE(M,Y)$ and $\|g-f\|<\eps$.
\end{proof}

Recall that $f \in \Lip (M, Y)$ is \emph{local} if for every $\varepsilon>0$ there exist $x,y \in M$ such that $0<d(x,y)<\varepsilon$ and $\|f (m_{x,y} ) \| > \|f\|_L - \varepsilon$. Notice that if $f \in \LipSE (M, Y)$, then $f$ is a non-local Lipschitz function. Conversely, it is known that if $M$ is compact and $f \in \Lip (M,Y)$ is non-local, then $T_f$ attains its norm at a strongly exposed molecule (see \cite[Lemma 3.13]{CGMR}). Therefore, the following is an immediate consequence of Lemma \ref{lemma:LipSE-ASE}.

\begin{cor}\label{cor:non-local}
If $f \in \Lip (M,Y)$ is a non-local Lipschitz function and $\varepsilon>0$, then there exists $g \in \LipSE (M,Y)$ such that $\|f-g\|_L < \varepsilon$. In particular, 
    \[
    \{f \in \Lip (M,Y) : \text{$f$ is non-local}\} \subseteq \overline{\LipSE (M,Y)}.
    \]
\end{cor}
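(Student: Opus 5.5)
The plan is to reduce the statement to Lemma \ref{lemma:LipSE-ASE}. Given a non-local $f$ and $\varepsilon>0$, I will first produce $f_1\in \SNA(M,Y)$ with $\|f-f_1\|_L<\varepsilon/2$ which attains its norm at a molecule $m_{p,q}\in \str(B_{\mathcal F(M)})$. Lemma \ref{lemma:LipSE-ASE} applied to $f_1$ then gives $g\in\LipSE(M,Y)$ with $\|f_1-g\|_L<\varepsilon/2$, and hence $\|f-g\|_L<\varepsilon$. The ``in particular'' part is just a restatement of this. The real content is the first step.

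When $M$ is compact, the first step is \cite[Lemma 3.13]{CGMR} quoted above: one can take $f_1=f$ itself, since $T_f$ already attains its norm at a strongly exposed molecule. For general $M$, I would argue as follows. Non-locality yields $\eta_0>0$ such that
\[
\|f(m_{x,y})\|\le \|f\|_L-\eta_0 \quad\text{whenever } d(x,y)<\eta_0 .
\]
Hence every pair whose slope is within $\eta_0$ of $\|f\|_L$ is $\eta_0$-separated. I want to exploit this in two ways.

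First, attainment. Choose a pair $(p,q)$ with $\|f(m_{p,q})\|>\|f\|_L-\eta$ for small $\eta$, and pick $y^*\in S_{Y^*}$ norming $f(m_{p,q})$. I will add a small perturbation $h\otimes y_0$, where $y_0$ points in the direction of $f(p)-f(q)$. For $h$ I try the scalar bump
\[
h(x)=c\,\bigl(d(x,q)-d(x,p)\bigr),
\]
or a truncated version of it localized near $p$ and $q$. This raises the slope on $(p,q)$ by about $2c$, while raising the slope on any other pair by strictly less unless that pair is close to $(p,q)$ itself. Choosing $\eta\ll c\ll\varepsilon$, the perturbed map $f_1$ should attain its norm at $(p,q)$. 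The $\eta_0$-separation is what keeps short pairs from competing: they lose $\eta_0$ in slope and gain at most $2c$.

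Second, strong exposure. By \eqref{iff_strexp_Z}, it remains to check that $(p,q)$ fails property (Z). Suppose it had (Z). Take $r$ with
\[
d(p,r)+d(r,q)-d(p,q)\le \epsilon\,\min\{d(p,r),d(r,q)\}.
\]
Writing $f_1(p)-f_1(q)$ as the sum of the increments over $(p,r)$ and $(r,q)$ shows that both sub-slopes are within $O(\epsilon)$ of $\|f_1\|_L$. Iterating on the shorter piece then produces almost-norming pairs of arbitrarily small length, or at least of length below $\eta_0$. This contradicts the separation, which is inherited by $f_1$ with the constant $\eta_0-2c$. Hence $m_{p,q}\in\str(B_{\mathcal F(M)})$, and Lemma \ref{lemma:LipSE-ASE} applies.

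I expect the main obstacle to be the attainment step in the non-compact case. The perturbation must make $(p,q)$ exactly norming without creating new near-norming pairs far away, and the naive $h$ above has global Lipschitz constant $2c$ everywhere. I would first try a peak-type modification: compose the distance functions with cut-offs supported in small balls around $p$ and $q$ of radius less than $\eta_0/4$. Pairs not meeting these balls then gain nothing. I would also verify that the iteration in the property (Z) argument really drives lengths below $\eta_0$, and not merely by a fixed factor; the bound of the defect by $\epsilon\min\{\cdot\}$ should give this, since one can always pass to the shorter sub-pair while keeping its slope near the norm.
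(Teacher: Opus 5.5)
For compact $M$ your argument is the paper's. By \cite[Lemma 3.13]{CGMR}, a non-local $f$ on a compact $M$ already attains its norm at some $m_{p,q}\in\str(B_{\F(M)})$. So $f_1=f$, no $\varepsilon/2$ splitting is needed, and Lemma \ref{lemma:LipSE-ASE} gives $g$. This is the whole of the paper's justification. Compactness is the hypothesis of the sentence introducing the corollary, and the corollary is later applied only to compact spaces ($\mathfrak{M}_p$ and Proposition \ref{prop:isometric_copy_[0,1]}). The paper gives no argument for non-compact $M$, and your sketch for that case has genuine gaps.

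\emph{Attainment step.} For $h=c\,(d(\cdot,q)-d(\cdot,p))$ one has
\[
h(m_{x,y})=2c-c\,\frac{[d(x,y)+d(y,q)-d(x,q)]+[d(x,y)+d(x,p)-d(y,p)]}{d(x,y)},
\]
and nothing keeps this a definite amount below $2c$ away from $(p,q)$. Take the four-cycle $p,x,y,q$ with consecutive distances $1$ and diagonals $2$. There $m_{p,q}$ is strongly exposed, yet $h(m_{x,y})=2c$. So a competing pair with slope within $\eta$ of $\|f\|$ and increment $2c-o(1)$ can overtake $(p,q)$, and in a non-compact $M$ nothing produces a maximiser. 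Localising does not help. A perturbation with Lipschitz constant $c$ supported in $\rho$-balls around $p,q$ raises the slope of $(p,q)$ by at most $2c\rho/d(p,q)$. That quantity depends on a pair you choose only after fixing $\eta$, so you cannot guarantee $\eta$ is small relative to it. Moreover, for the natural choice $c(\rho-d(\cdot,p))^+-c(\rho-d(\cdot,q))^+$ with $\rho<\eta_0/4$, every pair $(p,y)$ with $\eta_0\le d(p,y)<d(p,q)/2$ gains $c\rho/d(p,y)$, which is more than $(p,q)$ gains. Non-locality says nothing about such pairs.

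\emph{Exposure step.} If $f_1$ attains its norm at $(p,q)$ and $(p,q)$ has property (Z), you do get $(1-\epsilon)$-norming pairs $(p,r_\epsilon)$ and $(r_\epsilon,q)$, whose lengths are bounded below by non-locality. But you cannot iterate. Property (Z) belongs to the specific pair, and $(p,r_\epsilon)$ need not have it. If it does not, you have only found an \emph{almost} norming strongly exposed molecule, which puts you back at the attainment problem. Moreover, for a merely $(1-\theta)$-norming pair $(a,b)$, splitting at $r$ only gives the shorter piece the slope bound $\|f_1\|\,(1-\theta-\theta\,d(r,b)/d(a,r)-\epsilon)$. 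So "keeping the shorter sub-pair near the norm" fails.

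Compactness is exactly what closes this step. Take a norming pair of minimal length in the compact set of norming pairs. Then let $r_\epsilon\to r$ to obtain an exactly norming pair $(p,r)$ with $d(p,r)\le d(p,q)-\eta_0$, a contradiction. Without compactness neither the minimal pair nor the limit $r$ need exist. As it stands, your proposal proves the corollary only for compact $M$, which is what the paper proves.
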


\begin{rem}
As mentioned above, we have 
    \[\begin{tikzcd}
	{f \in \text{LipSE}(M)} & {f \text{ is non-local}} & {f\in\SNA(M) \text{ at } m_{p,q}\in \text{str-exp}(B_{\mathcal{F}(M)})}
	\arrow["\textup{($*$)}", Rightarrow, from=1-1, to=1-2]
	\arrow["\textup{($**$)}", Rightarrow, from=1-2, to=1-3]
\end{tikzcd}\]
where ($*$) holds for a complete metric space while ($**$) holds when $M$ is compact.

Neither implication ($*$) nor ($**$) is reversible in general. For ($*$), consider any uniformly discrete metric space $M$ such that $\LipSE (M) \neq \Lip (M)$ (for instance, $M=\mathbb{N}$). Note that every Lipschitz function on $M$ is non-local. For ($**$), let $M := ([0,1]\times \{0\}) \cup \{(0,1), (1,1)\} \subseteq \ell_1^2$. Define 
\[
f(p) = d(0',p) \quad (p\in M),
\]
where $0'=(0,0)$ is the origin in $\mathbb{R}^2$. Then $f \in \SNA(M)$ at $m_{(0,1),(1,1)}$ which is a strongly exposed point of $B_{\mathcal{F}(M)}$, however $f$ is local. 
\end{rem}

\begin{figure}[H]
\[\begin{tikzcd}
	{\overline{\SNA(M)}=\Lip (M)} && {\overline{\text{LipSE}(M)}=\Lip (M)} \\
	\\
	{B_{\mathcal{F}(M)} = \overline{\text{co}} (\text{ext} ( B_{\mathcal{F}(M)}))} && {B_{\mathcal{F}(M)} = \overline{\text{co}} (\text{str-ext} ( B_{\mathcal{F}(M)}))}
	\arrow["{(i)}"', Rightarrow, from=1-1, to=3-1]
	\arrow["{(ii)}", Rightarrow, from=1-1, dashed, to=3-3]
	\arrow[Rightarrow, from=1-3, to=1-1]
	\arrow["{(iii)}", Rightarrow, from=1-3, to=3-3]
	\arrow[Rightarrow, from=3-3, to=3-1]
\end{tikzcd}\]
\caption{
}
\label{fig:relation2}
\end{figure}

In Figure \ref{fig:relation2}, implication (i) follows from \cite[Theorem 3.3]{CGMR}. Implication (ii) holds whenever $M$ is compact and was proved in \cite[Theorem 3.15]{CGMR}. Finally, implication (iii) is an immediate consequence of the Hahn--Banach theorem.

\subsection{The metric space $\mathfrak{M}_p$}

We will observe that there exists a metric space $M$ such that $\LipSE(M)$ is dense in $\Lip(M)$, while $\mathcal F(M)$ fails all currently known sufficient conditions guaranteeing universal denseness of strongly norm-attaining Lipschitz maps. To this end, we follow the idea of \cite{CGMR} and use the metric space constructed in \cite[Theorem 2.5]{CGMR}, which was originally
introduced to show that being a universal domain metric space for $\SNA$-denseness does not imply that $\mathcal F(M)$ has the RNP.

\begin{definition}[\text{\cite[Theorem 2.5]{CGMR}}]
    Consider the subsets of $\mathbb{R}^2$ given by 
    \begin{align*}
        A_n &= \left\{ \left ( \frac{k}{2^n}, \frac{1}{2^n} : k \in \{0,\ldots, 2^n \} \right ) \right\} \subseteq \mathbb{R}^2, \quad \forall n\in\mathbb{N}, \\
        M_n &= \bigcup_{n=0}^\infty A_n, \quad M = M_\infty \cup ([0,1]\times \{0\} ). 
    \end{align*}
    Let $\mathfrak{M}_p$ be the set $M$ endowed with the distance inherited from $(\mathbb{R}^2 , \|\cdot\|_p)$ for $p=1,2$. 
\end{definition}

In \cite[Proposition 2.6]{CGMR}, it is proved that $\SNA (\mathfrak{M}_p, Y)$ is dense in $\Lip(\mathfrak{M}_p, Y)$ for every Banach space $Y$ and $p=1,2$. A closer inspection of the proof reveals that, in fact, the following stronger statement is established:
\[
\{f\in\Lip(\mathfrak{M}_p,Y): \text{$f$ is non-local} \}
\]
is dense in $\Lip(\mathfrak{M}_p,Y)$.

Therefore, Corollary~\ref{cor:non-local} immediately yields the following consequence.

\begin{prop}\label{prop:M_p_density}
    $\LipSE (\mathfrak{M}_p , Y)$ is dense in $\Lip (\mathfrak{M}_p,Y)$ for every Banach space $Y$ and $p=1,2.$
\end{prop}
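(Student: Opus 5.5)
The plan is to combine the density statement quoted just before the proposition with Corollary~\ref{cor:non-local}. The key input, taken from a closer reading of the proof of \cite[Proposition 2.6]{CGMR}, is that the set
\[
\mathcal{N} := \{f\in\Lip(\mathfrak{M}_p,Y): \text{$f$ is non-local}\}
\]
is norm dense in $\Lip(\mathfrak{M}_p,Y)$ for every Banach space $Y$ and $p=1,2$. Corollary~\ref{cor:non-local} gives $\mathcal{N}\subseteq\overline{\LipSE(\mathfrak{M}_p,Y)}$. Passing to closures then yields
\[
\Lip(\mathfrak{M}_p,Y)=\overline{\mathcal{N}}\subseteq \overline{\LipSE(\mathfrak{M}_p,Y)},
\]
which is the claim.

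Concretely, fix $f\in\Lip(\mathfrak{M}_p,Y)$ and $\eps>0$.

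\textbf{Step 1.} Choose a non-local $h$ with $\|f-h\|_L<\eps/2$. To justify this, I would recall the construction in \cite{CGMR}. Roughly, one modifies $f$ on the levels $A_n$ for $n$ large, together with the segment $[0,1]\times\{0\}$. The modification is arranged so that pairs of points at small distance, where the slope of $f$ could approach $\|f\|$, become strictly subcritical. Meanwhile, some pair in a finite level set $M_N$ realizes the norm with a definite gap.

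\textbf{Step 2.} Apply Corollary~\ref{cor:non-local} to $h$ to obtain $g\in\LipSE(\mathfrak{M}_p,Y)$ with $\|h-g\|_L<\eps/2$. Then $\|f-g\|_L<\eps$.

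The main obstacle is Step 1: verifying that the approximants built in \cite[Proposition 2.6]{CGMR} really are non-local, and not merely strongly norm attaining. I would check it as follows. For the modified map, every pair $(x,y)$ with $d(x,y)$ below some threshold $r_0>0$ has slope at most $\|h\|-\eta$ for some $\eta>0$. Given any $\eps'<\min\{r_0,\eta\}$, no pair with $0<d(x,y)<\eps'$ can then satisfy $\|h(m_{x,y})\|>\|h\|_L-\eps'$, so $h$ is non-local.

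A second point needs attention. Corollary~\ref{cor:non-local} was derived through \cite[Lemma 3.13]{CGMR}, which is stated for compact $M$. Here $\mathfrak{M}_p$ is compact: it is a closed and bounded subset of $\mathbb{R}^2$, since the accumulation points of $\bigcup_n A_n$ lie on $[0,1]\times\{0\}$. So the hypothesis is satisfied and the argument goes through.
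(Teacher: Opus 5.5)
Your proposal is correct and follows the paper's argument exactly: norm density of the non-local maps in $\Lip(\mathfrak{M}_p,Y)$, taken from the proof of \cite[Proposition 2.6]{CGMR}, is combined with Corollary~\ref{cor:non-local}. Your extra check that $\mathfrak{M}_p$ is compact is a worthwhile addition, because the paper justifies Corollary~\ref{cor:non-local} through the compact-case result \cite[Lemma 3.13]{CGMR}, even though the corollary is stated without a compactness hypothesis.
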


\begin{example}\label{ex:M_2_plus_snowflake}
There is a complete metric space $M$ satisfying that 
\begin{itemize}
    \itemsep0.25em
    \item $\LipSE (M,Y)$ is dense in $\Lip (M,Y)$ for every Banach space $Y$; 
    \item $\mathcal{F}(M)$ fails the RNP;
    \item $\mathcal{F}(M)$ fails property quasi-$\alpha$; 
    \item $\mathcal{F}(M)$ does not contain any norming uniformly strongly exposed set.
\end{itemize}
\end{example}

Indeed, by \cite[Example 2.12]{CGMR}, the metric space 
\[
M :=\mathfrak{M}_2 \bigsqcup [0,1]^{1/2}
\]
is known to satisfy all the conditions above except the first one. To check the first condition, we need the following lemma.

\begin{lemma}\label{lem:ell_1_sum}
    Let $X,Y$ and $Z$ be Banach spaces. If $\ASE(X,Z)$ and $\ASE(Y,Z)$ are dense in $\Lin (X,Z)$ and $\Lin (Y,Z)$, respectively, then $\ASE (X\oplus_1 Y, Z)$ is dense in $\Lin (X\oplus_1 Y,Z)$.
\end{lemma}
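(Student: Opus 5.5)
Given $T\in\Lin(X\oplus_1 Y,Z)$ and $\eps>0$, write $T=(A,B)$ with $A=T|_X\in\Lin(X,Z)$ and $B=T|_Y\in\Lin(Y,Z)$. Recall that $\|(x,y)\|=\|x\|+\|y\|$ and that $\|T\|=\max\{\|A\|,\|B\|\}$. The plan is to perturb $A$ and $B$ separately, then force the norm of one component to dominate strictly.

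Assume without loss of generality that $\|A\|\ge\|B\|$, and that $T\neq 0$.

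\textbf{Step 1.} By hypothesis, pick $A_1\in\ASE(X,Z)$ with $\|A_1-A\|<\eps/3$. Put $A_2:=(1+\eta)A_1$ for a small $\eta>0$; it remains in $\ASE(X,Z)$, since $\ASE$ is invariant under positive scaling.

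\textbf{Step 2.} Put $B_1:=(1-\eta)B$. No ASE property is needed for $B_1$; it only has to satisfy $\|B_1\|<\|A_2\|$. This holds because
\[
\|B_1\|\le(1-\eta)\|B\|\le(1-\eta)\|A\|<(1+\eta)\|A_1\|
\]
for small $\eta$. The last inequality uses $\|A_1\|\ge\|A\|-\eps/3$ together with $\|A\|>0$, and can be arranged by also choosing $\|A_1-A\|$ tiny relative to $\eta\|A\|$. When $\|A\|=\|B\|=0$ the statement is trivial.

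Define $S:=(A_2,B_1)$. Then $\|S-T\|\le\|A_2-A\|+\|B_1-B\|<\eps$ for $\eta$ small.

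\textbf{Step 3 (main point).} We show $S\in\ASE$ at $(x_0,0)$, where $A_2$ absolutely strongly exposes $x_0\in B_X$. Let $(x_n,y_n)\in B_{X\oplus_1Y}$ satisfy $\|S(x_n,y_n)\|\to\|S\|=\|A_2\|$. Then
\[
\|S(x_n,y_n)\|\le\|A_2\|\|x_n\|+\|B_1\|\|y_n\|\le\|A_2\|-(\|A_2\|-\|B_1\|)\|y_n\|.
\]
Since the gap $\|A_2\|-\|B_1\|$ is strictly positive, this forces $\|y_n\|\to0$. Consequently $\|A_2x_n\|\to\|A_2\|$ with $\|x_n\|\le1$, so some subsequence satisfies $x_{k_n}\to\pm x_0$. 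Hence $(x_{k_n},y_{k_n})\to\pm(x_0,0)$, as required.

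The key obstacle is precisely ensuring this strict gap; the scaling trick of Step 2 handles it. In particular, density of $\ASE(Y,Z)$ is not even needed.
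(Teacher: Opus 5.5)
Your proof is correct and follows the paper's argument: perturb the dominant component into an absolutely strongly exposing operator, inflate it to get a strict norm gap, and use the $\ell_1$-estimate to force $\|y_n\|\to 0$. The paper also perturbs $T_2$ into $\ASE(Y,Z)$, but it only uses the norm of that operator, so your point that the dominated component needs no ASE property is right. However, your closing claim that density of $\ASE(Y,Z)$ is not needed holds only for $T$ with $\|A\|\ge\|B\|$: your ``without loss of generality'' swaps $X$ and $Y$ when $\|B\|>\|A\|$. In fact the $Y$-hypothesis cannot be dropped; take $X=Z=\mathbb{R}$, $Y=L_1[0,1]$, and $T=(0,\mathbf{1})$.
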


\begin{proof}
    Let $T \in \Lin (X\oplus_1 Y,Z)$ be a norm one element and write $T = T_1P_X + T_2P_Y$ where $P_X$, $P_Y$ are the canonical projections from $X \oplus_1 Y$ into $X$ and $Y$, respectively. Note that $\|T\|=\max \{\|T_1\|,\|T_2\|\}$. Without loss of generality, we assume that $\|T_1\| =1$. Let  $\varepsilon>0$ be given. Find $(T_1', T_2') \in \ASE(X,Z) \times \ASE(Y,Z)$ such that $\|T_1'\| =1$, $\|T_2'\|= \|T_2\|$, $\|T_1-T_1' \| < \varepsilon$ and $\|T_2 - T_2' \| <\varepsilon$.  Thus the operator
$$
S(x,y) := (1+\eps)T_1'P_X(x,y) + T_2' P_Y(x,y) \in \Lin (X\oplus_1 Y,Z)
$$
satisfies that $\|S \| = 1+\varepsilon$ and $\| S-T \| <2\varepsilon$. If $((x_n,y_n))_n \subset B_{X\oplus_1 Y}$ is such that $\|S(x_n,y_n)\| \to 1+\varepsilon$, then 
    \[
    1+\varepsilon \leftarrow \|(1+\eps)T_1'(x_n) + T_2' (y_n)\| \leq (1+\varepsilon)\|x_n\| + \|T_2'\| \|y_n\| \leq  1+\varepsilon.
    \]
    Thus, $\|y_n\| \to 0$ and $\|x_n\| \to 1$; hence $(x_n)$ is convergent since $T_1' \in \ASE(X,Z)$. It follows that $(x_n, y_n)$ is convergent; and we conclude that $S \in \ASE (X \oplus_1 Y, Z)$.
\end{proof}

Now, note from Proposition \ref{prop:M_p_density} that $\ASE (\mathcal{F}(\mathfrak{M_2} ), Y)$ is dense in $\mathcal{L} (\mathcal{F}(\mathfrak{M_2} ), Y)$. Since $\mathcal{F}([0,1]^{1/2})$ has the RNP, $\ASE (\mathcal{F}([0,1]^{1/2}), Y)$ is dense in $\mathcal{L} (\mathcal{F}([0,1]^{1/2} ), Y)$ by Corollary \ref{cor:universal-domain-dense}. Therefore, Lemma \ref{lem:ell_1_sum} shows that the metric union $M= \mathfrak{M}_2 \bigsqcup [0,1]^{1/2}$ satisfies 
\[
\ASE (\mathcal{F}(M), Y) = \ASE (\mathcal{F}(\mathfrak{M}_2) \oplus_1 \F([0,1]^{1/2}),Y) 
\]
is dense in $\mathcal{L} (\mathcal{F}(M),Y)$. Indeed, it is well known that $\F(N \bigsqcup N') = \F(N) \oplus_1 \F(N')$ for every pointed metric spaces $N$ and $N'$. Consequently, Theorem \ref{theorem:Lip-linear} shows that $\LipSE (M,Y)$ is dense in $\Lip(M,Y)$. This finishes the verification of Example \ref{ex:M_2_plus_snowflake}.

\subsection{Isomorphic characterizations and residuality}\label{subsection:isomorphic}

Building on Kirchheim’s metric rectifiability theory \cite{Kirchheim}, the authors in \cite{AGPP2022} identified the absence of nontrivial curve fragments as the natural geometric condition governing locally flat Lipschitz functions. Recall that a metric space is said to be \emph{purely $1$-unrectifiable} if every Lipschitz image contained in $M$ of a subset of $\mathbb{R}$ has zero one-dimensional Hausdorff measure. They proved that for every complete metric space $M$, $\F(M)$ has the \textup{RNP} if and only if $M$ is purely $1$-unrectifiable. Using this characterization together with Theorem \ref{theorem:Lip-linear}, we obtain the following isomorphic characterization.

\begin{theorem}
Let $M$ be a complete metric space. The following are equivalent.
\begin{enumerate}
\item[\textup{(a)}] $M$ is purely $1$-unrectifiable. 
\item[\textup{(b)}] $\ASE (\mathcal{F}(N'), Y)$ is dense in $\Lin (\mathcal{F}(N'), Y)$ for each bi-Lipschitz copy $N'$ of a closed subset $N$ of $M$ and Banach space $Y$. 
\item[\textup{(c)}] $\LipSE(N',Y)$ is dense in $\Lip (N', Y)$ for each bi-Lipschitz copy $N'$ of a closed subset $N$ of $M$ and Banach space $Y$. 
\item[\textup{(d)}] $\SNA(N',Y)$ is dense in $\Lip (N', Y)$ for each bi-Lipschitz copy $N'$ of a closed subset $N$ of $M$ and Banach space $Y$. 
\end{enumerate}
\end{theorem}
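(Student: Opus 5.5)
I would prove the cycle (a)$\implies$(b)$\implies$(c)$\implies$(d)$\implies$(a), using the Aliaga--Gartland--Petitjean--Procházka characterization recalled above ($\F(M)$ has the RNP iff $M$ is purely $1$-unrectifiable) together with Theorem \ref{theorem:Lip-linear}.

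For (a)$\implies$(b), fix a closed subset $N\subseteq M$ and a bi-Lipschitz copy $N'$ of $N$. Being closed in a complete space, $N$ is complete. Pure $1$-unrectifiability passes to subsets, since a curve fragment in $N$ is a curve fragment in $M$. It is also invariant under bi-Lipschitz maps, because $\mathcal H^1$-nullity of Lipschitz images of subsets of $\mathbb R$ is preserved. Hence $N'$ is complete and purely $1$-unrectifiable, so $\F(N')$ has the RNP. Bourgain's theorem, recorded in \cite[Proposition 4.2]{CGMR} and used in Corollary \ref{cor:universal-domain-dense}(i), then gives density of $\ASE(\F(N'),Y)$ for every $Y$. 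If one prefers not to pass through completeness of $N'$, note that a bi-Lipschitz bijection $N\to N'$ induces an isomorphism $\F(N)\cong\F(N')$, and the RNP is an isomorphic invariant.

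For (b)$\implies$(c), use the isometric identification $\Lip(N',Y)\cong\Lin(\F(N'),Y)$, $f\mapsto T_f$, together with Theorem \ref{theorem:Lip-linear}(a)$\iff$(c). These show that this identification carries $\LipSE(N',Y)$ exactly onto $\ASE(\F(N'),Y)$, so density transfers.

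For (c)$\implies$(d), it suffices that $\LipSE(N',Y)\subseteq\SNA(N',Y)$. By definition a strongly exposing map $f$ at $m_{p,q}$ satisfies $\|f(m_{p,q})\|=\|f\|$. To see this, take molecules $m_{p_n,q_n}$ with $\|f(m_{p_n,q_n})\|\to\|f\|$. A subsequence converges to $\pm m_{p,q}$, and continuity of $T_f$ gives $\|f(m_{p,q})\|=\|f\|$.

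The main obstacle is (d)$\implies$(a), which requires a converse result from the literature. I would argue by contraposition. If $M$ is not purely $1$-unrectifiable, then by Kirchheim's theory there is a compact $K\subseteq\mathbb R$ of positive measure and a bi-Lipschitz embedding $\gamma\colon K\to M$. Then $N=\gamma(K)$ is a closed (compact) subset of $M$, and $N'=K$ is a bi-Lipschitz copy of it. It remains to show that $\SNA(K,Y)$ is not dense in $\Lip(K,Y)$ for some $Y$. Here I would invoke the known isomorphic characterization from \cite{CGMR} (building on \cite{AGPP2022}): for such $K$ some equivalent renorming fails SNA-density, e.g.\ with $Y=\mathbb R$. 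If the precise statement in \cite{CGMR} instead asserts that $\F(N')$ must have the RNP whenever $\SNA$ is dense for every bi-Lipschitz copy $N'$ and every $Y$, I would cite that version directly and conclude via the RNP characterization. This step relies entirely on that external result, and matching its hypotheses (closed subsets, bi-Lipschitz copies, arbitrary $Y$) is the point that needs care.
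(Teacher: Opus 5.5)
Your handling of (a)$\implies$(b)$\implies$(c)$\implies$(d) matches the paper's proof. You even add details it leaves implicit, such as completeness of $N'$ and $\LipSE\subseteq\SNA$. Your Kirchheim reduction in (d)$\implies$(a) is also the paper's: a compact $K\subseteq\mathbb R$ of positive measure, with $N=\gamma(K)$ and $N'=K$.

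The gap is the very last step. You propose to finish by citing an ``isomorphic characterization from \cite{CGMR}'' saying that some equivalent renorming fails SNA-density. As a fallback, you would cite a statement that density of $\SNA$ for all bi-Lipschitz copies forces the RNP. No such result is available there: the Remark after the theorem says that (d)$\implies$(a) is exactly the implication \cite{CGMR} left open. So your final step either cites a nonexistent theorem or assumes what is to be proved, and (d)$\implies$(a) remains unproved as written.

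The missing ingredient needs no renorming at all. Since (d) ranges over bi-Lipschitz copies of closed subsets, you may test it on $N'=K$ with its Euclidean metric and $Y=\mathbb R$. For a compact $K\subseteq\mathbb R$ of positive Lebesgue measure, $\SNA(K)$ is not dense in $\Lip(K)$ directly; this is \cite[Theorem 2.3]{CCGMR}, which is what the paper cites.

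For orientation, here is why. Pick a measurable $A$ with $0<|A\cap I|<|I|$ for every nondegenerate interval $I$. Replace $A$ by its complement if needed so that $|A\cap K|>0$. Let $f(x)=\int_{x_0}^x\chi_{A\cap K}(t)\,dt$ on $K$, with $x_0\in K$ the base point; a density-point argument gives $\|f\|=1$.

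Suppose $\|g-f\|<1/4$ and $g$ attained its norm at $p<q$. Let $G$ be the piecewise affine extension of $g$ to the convex hull of $K$. Attainment forces $|G'|=\|g\|>3/4$ a.e. on $[p,q]$. On the other hand $|G'|<1/4$ a.e. off $A\cap K$, since there the derivative of the corresponding extension of $f$ vanishes. But $[p,q]\setminus(A\cap K)$ always has positive measure: it contains a gap of $K$ unless $[p,q]\subseteq K$, and then it contains $[p,q]\setminus A$.

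With this concrete non-density result in place of your last citation, the proof is complete.
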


\begin{proof}
    It is clear that any bi-Lipschitz copy $N'$ of any closed subset of a purely $1$-unrectifiable metric space $M$ is also purely $1$-unrectifiable. Therefore, (a)$\implies$(b) holds by the classical result of Bourgain \cite{Bourgain77}. By Theorem \ref{theorem:Lip-linear}, there is an isometric correspondence between $\ASE(\mathcal{F}(\cdot),Y)$ and $\LipSE (\cdot, Y)$. This ensures the equivalence (b)$\iff$(c).

Since (c)$\implies$(d) is immediate, it remains to prove that (d)$\implies$(a). Assume to the contrary that $M$ is not purely $1$-unrectifiable. By Kirchheim’s lemma \cite{Kirchheim} (see also \cite[Definition 2.3]{FJLPPQ}), $M$ contains the image of a bi-Lipschitz embedding $\gamma: K \to M$, where $K \subseteq \mathbb{R}$ with positive Lebesgue measure. 
It follows that $K$ is a bi-Lipschitz copy of $\gamma(K) \subseteq M$, and by \cite[Theorem 2.3]{CCGMR}, $\SNA (K)$ is not dense in $\Lip (K)$.
\end{proof}

\begin{rem}
As a consequence, we answer a question posed in \cite[p.~21]{CGMR}, where the implication (d)$\implies$(a) was left open.    
\end{rem}

Regarding residuality, Theorem \ref{theorem:Lip-linear} shows that
$\LipSE(M,Y)$ is a $G_\delta$ subset of $\Lip(M,Y)$, since
$\ASE(\F(M),Y)$ is a $G_\delta$ subset of $\mathcal L(\F(M),Y)$
(see \cite[p.~7]{JMR23}). Consequently, if $\LipSE(M,Y)$ is dense in
$\Lip(M,Y)$, then $\LipSE(M,Y)$ is residual in $\Lip(M,Y)$; in particular,
$\SNA(M,Y)$ is residual in $\Lip(M,Y)$.
Inspection of the results in \cite{JMR23}, combined with Theorem
\ref{theorem:Lip-linear}, yields the following further consequences.

\begin{prop}
Let $M$ be a complete metric space and $Y$ be a Banach space. Then, we have the following.
\begin{enumerate}[label=\textup{(\alph*)}]
\item (\cite[Theorem 4.1]{JMR23}) If $M$ and $Y^*$ are separable and $\SNA(M,Y)$ is residual, then $\LipSE(M,Y)$ is dense in $\Lip (M,Y)$.

\item (\cite[Corollary 5.1]{JMR23}) If $M$ is separable and $\F(M)$ has the Lindenstrauss property A, then $\LipSE(M)$ is dense in $\Lip (M,Y)$.

\item (\cite[Corollary 5.8]{JMR23}) If $\LipSE(M)$ is dense in $\Lip(M)$ and a Banach space $Z$ satisfies one of the following conditions: 
\begin{enumerate}
    \item $Z$ has the \textup{RNP} and $\text{str-exp}(B_Z)$ is either countable or discrete up to rotation;
    \item $Z^*$ has the \textup{RNP} and $\text{str-exp}(B_Z)$ is countable up to rotation,
\end{enumerate}
then $\LipSE(M,Z)$ is dense in $\Lip (M,Z)$.
\end{enumerate}
\end{prop}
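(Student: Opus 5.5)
Each of the three items is obtained by translating a result of \cite{JMR23} on absolutely strongly exposing operators into the Lipschitz setting through the isometric identification $\Lip(M,Y)\cong\Lin(\F(M),Y)$, $f\mapsto T_f$, and Theorem \ref{theorem:Lip-linear}. The latter gives $f\in\LipSE(M,Y)$ if and only if $T_f\in\ASE(\F(M),Y)$. Since the identification is a surjective linear isometry, it carries dense sets to dense sets and residual sets to residual sets. Hence $\LipSE(M,Y)$ is dense (respectively residual) in $\Lip(M,Y)$ if and only if $\ASE(\F(M),Y)$ is dense (respectively residual) in $\Lin(\F(M),Y)$. In every case the task is therefore to check that the hypotheses on $M$ and $Y$ become the hypotheses of the quoted result for $X=\F(M)$.

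For (a), I first note that $\F(M)$ is separable whenever $M$ is, because the linear span of $\{\delta_x : x\in D\}$ is dense for any countable dense $D\subseteq M$. Next I relate $\SNA(M,Y)$ to $\NA(\F(M),Y)$. By definition, $f\in\SNA(M,Y)$ means that $T_f$ attains its norm at a molecule, so $\SNA(M,Y)$ corresponds to a subset of $\NA(\F(M),Y)$. Consequently, if $\SNA(M,Y)$ is residual, then so is $\NA(\F(M),Y)$, since a superset of a residual set is residual. \cite[Theorem 4.1]{JMR23} asserts that for separable $X$ and $Y^*$, residuality of $\NA(X,Y)$ forces $\ASE(X,Y)$ to be dense. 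Applying it with $X=\F(M)$ and pulling back gives the density of $\LipSE(M,Y)$. The step that needs care is confirming that the cited theorem really asks only for residuality of $\NA$ and separability of $X$ and $Y^*$. If it instead asks for the operators to attain their norm on a norming set, molecules still suffice, because $B_{\F(M)}=\overline{\co}(\Mol(M))$.

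For (b), the hypothesis that $\F(M)$ has Lindenstrauss property A, together with separability of $\F(M)$ (obtained from separability of $M$ as above), is exactly the input of \cite[Corollary 5.1]{JMR23}. That corollary gives density of $\ASE(\F(M),Y)$ for all $Y$, and Theorem \ref{theorem:Lip-linear} then converts this into density of $\LipSE(M,Y)$. I would also point out that the conclusion should read $\LipSE(M,Y)$, matching $\Lip(M,Y)$.

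For (c), I take $X=\F(M)$. The hypothesis that $\LipSE(M)$ is dense in $\Lip(M)$ translates via Theorem \ref{theorem:Lip-linear}, with $Y=\mathbb{R}$, into density of $\SE(\F(M))=\ASE(\F(M),\mathbb{R})$ in $\F(M)^*$. This is exactly the scalar hypothesis of \cite[Corollary 5.8]{JMR23}. Under either condition on $Z$, that corollary yields density of $\ASE(\F(M),Z)$, and translating back gives density of $\LipSE(M,Z)$ in $\Lip(M,Z)$.

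The main obstacle throughout is bookkeeping: checking that each cited result is stated for an arbitrary domain $X$ with exactly these hypotheses, so that specialising to $X=\F(M)$ is legitimate. No new estimates are needed.
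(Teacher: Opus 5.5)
Your proposal is correct and follows the same route as the paper. The paper's entire justification is to inspect the cited results of \cite{JMR23} for $X=\F(M)$ and transport them through the surjective isometry $f\mapsto T_f$ together with Theorem~\ref{theorem:Lip-linear}. This matches your argument, including the harmless step in (a) from residuality of $\SNA(M,Y)$ to residuality of $\NA(\F(M),Y)$, and your observation that the conclusion of (b) should read $\LipSE(M,Y)$ is correct.
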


The denseness of $\LipSE(M)$ seems to be a much stronger requirement than the denseness of $\SNA(M)$. However, we do not know whether these two denseness phenomena actually differ.

\begin{question}
Is $\overline{\LipSE(M)}=\Lip(M)$ equivalent to $\overline{\SNA(M)} = \Lip(M)$?
\end{question}

\section{Weak sequential denseness of strongly exposing Lipschitz maps}\label{section:weak}

\subsection{Positive results}
As mentioned already, the set $\SNA(M)$ is weakly sequentially dense
in $\Lip(M)$ for every metric space $M$ (\cite[Theorem 4.1]{CCGMR}). Therefore, whenever strongly norm-attaining Lipschitz maps on $M$ can be approximated (either in norm or in the weak topology) by elements of $\LipSE(M)$, it follows that $\LipSE(M)$ is also weakly sequentially dense. The following result shows that this strategy indeed works when $M$ is a compact metric space containing no isometric copy of $[0,1]$.

\begin{prop}\label{prop:isometric_copy_[0,1]}
    Let $M$ be a compact metric space which does not contain any isometric copy of $[0,1]$. Then $\LipSE (M)$ is weakly sequentially dense in $\Lip (M)$.
\end{prop}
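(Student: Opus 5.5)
The strategy is to combine the weak sequential density of $\SNA(M)$ from \cite[Theorem 4.1]{CCGMR} with Lemma~\ref{lemma:LipSE-ASE}. Given $f\in\Lip(M)$, I fix a sequence $(f_n)\subseteq\SNA(M)$ with $f_n\to f$ weakly. If I can show that each $f_n$ attains its norm at a strongly exposed molecule $m_{p_n,q_n}\in\str(B_{\F(M)})$, then Lemma~\ref{lemma:LipSE-ASE} yields $g_n\in\LipSE(M)$ with $\|f_n-g_n\|<1/n$. Since $g_n-f_n\to0$ in norm, it follows that $g_n\to f$ weakly. So the whole proof reduces to this claim:

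\emph{If $M$ is compact and contains no isometric copy of $[0,1]$, then every $m_{p,q}\in\Mol(M)$ is strongly exposed.}

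Once the claim is proved, every strongly norm-attaining function attains its norm at a strongly exposed molecule, and we are done.

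To prove the claim, I use \eqref{iff_strexp_Z}. Since compact spaces are complete, it suffices to show that no pair $(p,q)$ with $p\neq q$ has property (Z). Suppose for contradiction that $(p,q)$ has (Z). Then there are points $r_n\in M\setminus\{p,q\}$ with
\[
d(p,r_n)+d(r_n,q)-d(p,q)\le \tfrac1n\min\{d(p,r_n),d(r_n,q)\}.
\]
By compactness I pass to a subsequence with $r_n\to r$. The limit satisfies $d(p,r)+d(r,q)=d(p,q)$.

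I next need $r\notin\{p,q\}$. Here I would use a standard compactness-and-iteration argument of the kind behind \cite[Theorem 1.5]{AM}. The idea is to define
\[
D=\{x\in M: d(p,x)+d(x,q)=d(p,q)\},
\]
which is a closed, hence compact, subset. I would show that property (Z), applied to pairs in $D$ with compactness, produces exact metric midpoints between any two points of $D$ lying in "metric segment order". Iterating the midpoint construction along dyadic rationals, and then taking the closure using completeness, gives an isometric copy of $[0,d(p,q)]$ inside $M$. Rescaling then gives an isometric copy of $[0,1]$, which contradicts the hypothesis.

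The main obstacle is that property (Z) for the single pair $(p,q)$ does not obviously pass to the subpairs $(p,r)$ and $(r,q)$. In addition, the points $r_n$ might accumulate at $p$ or $q$, which would make the limit $r$ degenerate. The weight $\min\{d(p,r_n),d(r_n,q)\}$ in the definition of (Z) is designed to control exactly this situation, but it does not by itself rule out $r_n\to p$.

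My first attempt at this step would be a local rescaling argument. In compact $M$, if $r_n\to p$, I rescale by $d(p,r_n)$ and use the relative estimate to extract an approximate "geodesic direction" at $p$. If this does not work, I would fall back on the known fact that in a compact space, $(p,q)$ fails (Z) unless $p$ and $q$ are joined by a geodesic. Such a geodesic is an isometric copy of $[0,d(p,q)]$, which is again excluded by hypothesis. I would cite this fact from \cite{GPR} or \cite{AM} if it appears there as a lemma, rather than reprove it.
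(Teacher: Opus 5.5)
\textbf{There is a genuine gap.} Your reduction depends on the claim that, for compact $M$ with no isometric copy of $[0,1]$, every molecule is strongly exposed. That claim is false.

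Take $M=\{0,\tfrac12,1\}\subseteq\mathbb{R}$. The pair $(0,1)$ has property (Z) with $r=\tfrac12$, since the defect is $0$. So $m_{0,1}=\tfrac12(m_{0,1/2}+m_{1/2,1})$ is not even extreme. This is exactly your worry that (Z) for $(p,q)$ does not pass to subpairs, and it shows no midpoint iteration inside $D$ can produce a segment.

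The weaker statement your argument actually needs also fails: namely, that every $g\in\SNA(M)$ attains its norm at \emph{some} strongly exposed molecule. Let $M=\{p,q\}\cup\{r_n:n\ge 2\}\subseteq(\mathbb{R}^2,\|\cdot\|_2)$ with $p=(0,0)$, $q=(1,0)$ and $r_n=(1/n,1/n^2)$. This space is compact and countable, so it contains no isometric $[0,1]$. A direct estimate gives $d(p,r_n)+d(r_n,q)-d(p,q)\le n^{-3}$, while $\min\{d(p,r_n),d(r_n,q)\}\ge n^{-1}$. Hence $(p,q)$ has (Z), and $m_{p,q}\notin\str(B_{\F(M)})$ by \eqref{iff_strexp_Z}. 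Yet $g(x)=x_1$ satisfies $\|g\|=1$ and attains its norm only at $\pm m_{p,q}$; every other pair gives a ratio strictly below $1$.

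So the degenerate case $r_n\to p$ that you hoped to exclude really occurs. The ``known fact'' you planned to cite does not exist. What holds for compact $M$ concerns extreme points: $m_{p,q}$ is extreme iff $[p,q]=\{p,q\}$. In this example $m_{p,q}$ is extreme but not strongly exposed.

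\textbf{The missing idea.} You do not need exact norm attainment at a strongly exposed molecule. You only need approximation in norm by such maps, and norm-small perturbations preserve weak convergence. The paper proceeds in three steps.
\begin{enumerate}
\item By \cite[Lemma 3.1]{CGMR}, each $g_n\in\SNA(M)$ coming from \cite[Theorem 4.1]{CCGMR} attains its norm at an \emph{extreme} molecule. This is the only place where the absence of isometric copies of $[0,1]$ is used.
\item By compactness and \cite[Lemma 3.12]{CGMR}, such $g_n$ are norm limits of non-local maps $h_n$.
\item Non-local maps on compact $M$ attain their norm at strongly exposed molecules \cite[Lemma 3.13]{CGMR}. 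Corollary~\ref{cor:non-local}, via Lemma~\ref{lemma:LipSE-ASE}, then gives $f_n\in\LipSE(M)$ with $\|h_n-f_n\|\to 0$.
\end{enumerate}
The function $g$ in the example above is local, since $|g(m_{p,r_n})|\to 1$ while $d(p,r_n)\to 0$. This is precisely why the non-local perturbation step cannot be bypassed by a structural statement about molecules.
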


\begin{proof}
    Let $f \in \Lip (M)$ be given. From weak sequential denseness of $\SNA(M)$, take a sequence $(g_n) \subset \SNA (M)$ such that $g_n \xrightarrow{w} f$. Since $M$ does not contain any isometric copy of $[0,1]$, we can find $m_{p_n,q_n} \in \ext (B_{\mathcal{F}(M)})$ such that $g_n$ attains its norm at $(p_n, q_n)$ for each $n \in \mathbb{N}$ \cite[Lemma 3.1]{CGMR}. Since $M$ is compact, by \cite[Lemma 3.12]{CGMR}, there exist non-local Lipschitz maps $h_n \in \Lip (M,Y)$ such that $\|g_n-h_n \| \to 0$. Finally, Corollary \ref{cor:non-local} implies that there exists $f_n \in \LipSE (M,Y)$ such that $\|h_n- f_n\| \to 0$; hence $f_n \xrightarrow{w} f$.
\end{proof}

\begin{example}
Let $M$ be a nowhere dense closed subset of $[0,1]$ whose Lebesgue measure is positive. Then $\SNA (M)$ is not norm dense in $\Lip (M)$ (see \cite[Theorem 2.3]{CCGMR}). In particular, $\LipSE(M)$ is not norm dense in $\Lip (M)$. While Proposition \ref{prop:isometric_copy_[0,1]} shows that $\LipSE(M)$ is weakly sequentially dense in $\Lip(M)$.
\end{example}

The following results generalizes the proof in \cite[Theorem 4.1]{CCGMR} and \cite[Theorem 2.6]{KMS} for more general metric space. However, note that this result does not ensure that the norm of approximating Lipschitz functions converges to the norm of the target function.

\begin{theorem}\label{theorem:peak-ball-weak-dense}
Let $M$ be a complete metric space. Suppose that there exist a sequence of distinct pairs $((p_n,q_n))_n \subseteq M \times M$, a sequence $(r_n) \subseteq [0,1]$, and another sequence $(\eps_n) \subseteq (0,1)$ such that
\begin{enumerate}
\item[\textup{(i)}] $m_{p_n,q_n} \in \text{str-exp}(B_{\free(M)})$ for each $n \in \N$,
\item[\textup{(ii)}] $\sup_n \beta_n < 1$ for $\beta_n = \max \bigl\{ \frac{r_n}{r_n+\eps_n}, \frac{1-r_n}{(1-r_n)+\eps_n} \bigr\}$,
\item[\textup{(iii)}] $U_n$ and $U_m$ are mutually disjoint for all $n\neq m$ for the set
$$
U_n := B(p_n,(r_n+\eps_n)d(p_n,q_n)) \cup B(q_n,[(1-r_n)+\eps_n]d(p_n,q_n)).
$$
\end{enumerate}
Then, $\LipSE(M)$ is weakly sequentially dense in $\Lip(M)$.
\end{theorem}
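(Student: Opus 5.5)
Following the proofs of \cite[Theorem 4.1]{CCGMR} and \cite[Theorem 2.6]{KMS}, the plan is to add to a given $f$ a weakly null sequence of bumps $h_n$. Each $h_n$ will be supported in $U_n$ and will have so large a slope along $(p_n,q_n)$ that $f+h_n$ strongly attains its norm at $m_{p_n,q_n}$. Then we use hypothesis (i) and Lemma \ref{lemma:LipSE-ASE} to move from $\SNA$ to $\LipSE$ at a cost that tends to $0$ in norm.

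Let $f \in \Lip(M)$ and write $d_n = d(p_n,q_n)$. We may assume $\|f\| \le 1$. Fix a constant $C$ to be chosen later. Put $a_n = (r_n+\eps_n)d_n$ and $b_n = [(1-r_n)+\eps_n]d_n$. Define a tent function
\[
h_n(x) = \max\{\alpha_n(a_n - d(x,p_n)),\, 0\} - \max\{\alpha_n'(b_n - d(x,q_n)),\, 0\},
\]
normalized so that its value at $0$ is $0$. This is harmless: by (iii) at most one $U_n$ contains $0$, and we discard that index. Choose the heights so that $h_n(p_n)-h_n(q_n) = C d_n$. With $\alpha_n a_n = C r_n' d_n$ and $\alpha_n' b_n = C(1-r_n')d_n$ for a suitable split, the slopes are of order $C\, r_n d_n/a_n$ and $C(1-r_n)d_n/b_n$, that is, at most $C\beta_n$. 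By (ii), $\beta := \sup_n \beta_n < 1$. Since the supports are disjoint by (iii), I expect $\|h_n\| \le C\beta'$ for some $\beta'<1$ after a small case check. Pairs $x,y$ lying in different pieces of $U_n$, or one inside $U_n$ and one outside, need the triangle inequality together with $a_n + b_n > d_n$.

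Next, compare with $\|f+h_n\|$. We have $(f+h_n)(m_{p_n,q_n}) \ge C - 1$, while $\|f+h_n\| \le 1 + C\beta'$. Taking $C$ large only gives $C-1$ versus $C\beta'+1$. Strong attainment needs more: the exact value $\|f+h_n\|$ must be achieved at $m_{p_n,q_n}$. I therefore plan to replace $h_n$ by an explicit peaking construction, namely $g_n = f + h_n$ redefined on $U_n$ by a McShane-type extension. Concretely, set
\[
g_n := \text{the McShane extension of } (f+h_n)|_{M\setminus U_n} \cup \{g_n(p_n),g_n(q_n)\}
\]
with Lipschitz constant $L_n = \|f+h_n|_{M\setminus U_n}\|$, and prescribe $g_n(p_n)-g_n(q_n) = L_n d_n$, which is possible once $C$ is large. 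Then $g_n \in \SNA(M)$ at $m_{p_n,q_n}$. This matching of norms is the step I expect to be the main obstacle. Hypothesis (ii) must be used exactly here, to guarantee that the extension stays within $L_n$ while the pair $(p_n,q_n)$ achieves it.

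Once this is done, $g_n - f$ is supported in $U_n$ and is uniformly bounded in norm. Since the supports are pairwise disjoint, $(g_n - f)$ is equivalent to the $c_0$ basis and hence weakly null. This is the standard argument: for disjointly supported Lipschitz functions with uniformly bounded norm, $\|\sum_{n\in F} \pm (g_n-f)\| \le 2\sup_n\|g_n-f\|$ for every finite $F$. So $g_n \to f$ weakly. By (i) and Lemma \ref{lemma:LipSE-ASE}, there is $f_n \in \LipSE(M)$ with $\|f_n - g_n\| < 1/n$, and therefore $f_n \to f$ weakly.
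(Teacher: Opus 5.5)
\textbf{There is a genuine gap.} Your outer framework matches the paper's: modify $f$ only inside $U_n$, use disjointness of the $U_n$ (the upper $c_0$-estimate) to get $g_n-f\to 0$ weakly, then pass from $\SNA$ to $\LipSE$ via (i) and Lemma \ref{lemma:LipSE-ASE}. The central step, however, is left open: you need $g_n\in\SNA(M)$ at $m_{p_n,q_n}$ with $g_n=f$ off $U_n$ and $\sup_n\|g_n\|<\infty$. The version of this step you write down fails for three reasons.

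First, $\|h_n\|\le C\beta'$ with $\beta'<1$ cannot hold. You chose $h_n(m_{p_n,q_n})=C$, so $\|h_n\|\ge C$. The pair $(p_n,q_n)$ is itself a pair ``in different pieces'', and your inequality $C-1\le\|f+h_n\|\le 1+C\beta'$ is already contradictory for large $C$.

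Second, your tents vanish on $M\setminus U_n$. Hence $L_n=\|f|_{M\setminus U_n}\|\le\|f\|$ does not depend on $C$, and ``possible once $C$ is large'' has no content.

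Third, it is impossible in general to extend $f|_{M\setminus U_n}$ to $p_n,q_n$ with the \emph{same} constant $L_n$ while $|g_n(p_n)-g_n(q_n)|=L_nd_n$. The admissible values at $p_n,q_n$ lie in McShane intervals that can be too narrow. For a concrete case, let $M=\{a,b\}\cup\{p_n,q_n:n\in\N\}$ with base point $a$, $d(a,b)=3/2$, and all other distances between distinct points equal to $1$. Take $r_n=1/2$ and $\eps_n=1/4$. Then $U_n=\{p_n,q_n\}$, $\beta_n=2/3$, and $(p_n,q_n)$ fails property (Z), so (i)--(iii) hold. Put $f(a)=0$, $f(b)=3/2$ and $f\equiv 3/4$ elsewhere. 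Then $L_n=1$, but every $1$-Lipschitz extension of $f|_{M\setminus U_n}$ takes values in $[1/2,1]$ at $p_n$ and $q_n$. So $|g_n(p_n)-g_n(q_n)|\le 1/2<L_nd_n$.

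\textbf{The missing idea.} You must raise the norm above $\|f\|$; hypothesis (ii) is what makes the raise along $(p_n,q_n)$ beat the raise on cross pairs. The paper normalizes $\|g\|=1$ and works on $M_n=(M\setminus U_n)\cup\{p_n,q_n\}$, keeping the values of $g$ off $\{p_n,q_n\}$. It sets
\[
h_n(p_n)=c_n+\sigma_n r_nK_nd_n,\qquad h_n(q_n)=c_n-\sigma_n(1-r_n)K_nd_n,
\]
where $c_n=r_ng(p_n)+(1-r_n)g(q_n)$, $\sigma_n=\sgn[g(p_n)-g(q_n)]$ and $K_n=1+\frac{1}{1-\beta_n}$.

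With this choice:
\begin{itemize}
\item The slope at $(p_n,q_n)$ is exactly $K_n$.
\item For $q\notin U_n$, the bounds $d(p_n,q)\ge(r_n+\eps_n)d_n$ and $d(q_n,q)\ge((1-r_n)+\eps_n)d_n$ give slopes at most $1+\beta_nK_n<K_n$ for $(p_n,q)$ and $(q_n,q)$.
\item The McShane extension with constant $K_n$ lies in $\SNA(M)$ at $m_{p_n,q_n}$ and agrees with $g$ off $U_n$.
\item $\sup_nK_n<\infty$ by (ii).
\end{itemize}

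Your own tent heights also work if used this way. Put $\phi_n=f$ off $U_n$, $\phi_n(p_n)=f(p_n)+Cr_nd_n$ and $\phi_n(q_n)=f(q_n)-C(1-r_n)d_n$. The cross slopes are at most $1+C\beta_n$, and the slope at $(p_n,q_n)$ is at least $C-1$. So any $C\ge 2/(1-\sup_n\beta_n)$ makes $(p_n,q_n)$ a maximizing pair on $M_n$. The point is to McShane-extend with this larger constant (at most $C+1$), not with $L_n$.
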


\begin{proof}
We follow the proof of \cite[Theorem 2.6]{KMS}. Let $g \in \Lip(M)$ with $\|g\|=1$. We claim that there exists a sequence $(f_n)$ in $\LipSE(M)$ converging weakly to $g$. 

Write $M_n := (M \setminus U_n) \cup \{p_n,q_n\}$, and define a function $h_n$ on $M_n$ for each $n \in \mathbb{N}$ by
$$
h_n(x) = \begin{cases}
\, g(x) \,, & \quad x \in M_n \setminus \{p_n,q_n\} \\
\, r_ng(p_n) + (1-r_n)g(q_n) \\
+ \sgn[g(p_n)-g(q_n)]r_n\bigl(1+\frac{1}{1-\beta_n}\bigr) d(p_n,q_n) \,, & \quad x= p_n \\
\, r_ng(p_n) + (1-r_n)g(q_n) \\
- \sgn[g(p_n)-g(q_n)](1-r_n)\bigl(1+\frac{1}{1-\beta_n}\bigr) d(p_n,q_n) \,, & \quad x= q_n
\end{cases}
$$
where $\beta_n = \max \bigl\{ \frac{r_n}{r_n+\eps_n}, \frac{1-r_n}{(1-r_n)+\eps_n} \bigr\}<1$ for each $n \in \mathbb{N}$. Then, if $p,q \in M_n \setminus \{p_n,q_n\}$, then clearly
$$
\frac{|h_n(p) - h_n(q)|}{d(p,q)} \leq 1,
$$
and
$$
\frac{|h_n(p_n)-h_n(q_n)|}{d(p_n,q_n)} = \frac{\bigl(1+\frac{1}{1-\beta_n}\bigr)d(p_n,q_n)}{d(p_n,q_n)} = 1 + \frac{1}{1-\beta_n}.
$$
If $q \in M_n \setminus \{p_n,q_n\}$, then
\begin{align*}
&\frac{|h_n(p_n)-h_n(q)|}{d(p_n,q)} \\
&= \frac{r_ng(p_n) + (1-r_n)g(q_n) + \sgn[g(p_n)-g(q_n)]r_n\bigl(1+\frac{1}{1-\beta_n}\bigr)d(p_n,q_n)-h_n(q)}{d(p_n,q)} \\
&\leq \frac{|g(p_n)-g(q)|}{d(p_n,q)}   \\
 &\quad +\frac{|(1-r_n)[g(q_n)-g(p_n)] + \sgn[g(p_n)-g(q_n)]r_n\bigl(1+\frac{1}{1-\beta_n}\bigr)d(p_n,q_n)|}{d(p_n,q)} \\
 &=: \frac{|g(p_n)-g(q)|}{d(p_n,q)}  + (\dagger).
\end{align*}
Note that 
\begin{align}
    \Big|(1-r_n)[g(q_n)-g(p_n)] &+ \sgn[g(p_n)-g(q_n)]r_n\!\left(1+\frac{1}{1-\beta_n}\right)d(p_n,q_n) \Big| \nonumber \\
    &= \left| (1-r_n) |g(q_n)-g(p_n)| - r_n \!\left( 1+\frac{1}{1-\beta_n}\right) d(p_n,q_n) \right| \label{eq:1-r_n...(1)}
\end{align}
Since $\varepsilon_n \leq r_n$, we have
\[
\beta_n \geq \frac{1-r_n}{(1-r_n)+\eps_n} \geq 1-r_n,
\]
which implies that 
\begin{equation}\label{eq:r_n(1+1/1-beta_n)...(1)}
    r_n\!\left(1+\frac{1}{1-\beta_n}\right) \geq r_n\!\left(1+\frac{1}{r_n}\right) \geq 1 \geq 1-r_n.
\end{equation}
Hence, the quantity inside the absolute value on the right-hand side of \eqref{eq:1-r_n...(1)} is less than or equal to zero.
It follows that
\begin{align*}
    (\dagger) &\stackrel{(\ref{eq:1-r_n...(1)})}{=} \frac{\left| (1-r_n) |g(q_n)-g(p_n)| - r_n \left( 1+\frac{1}{1-\beta_n}\right) d(p_n,q_n) \right|}{d(p_n,q)} \\
    &= \frac{ r_n \left( 1+\frac{1}{1-\beta_n}\right) d(p_n,q_n) -(1-r_n) |g(q_n)-g(p_n)|  }{d(p_n,q)} \leq \frac{ r_n \left( 1+\frac{1}{1-\beta_n}\right) d(p_n,q_n)}{d(p_n,q)} 
\end{align*}

Finally, using $d(p_n,q) \geq (r_n+\varepsilon_n) d(p_n,q_n)$, we obtain that 
{
\begin{align*}
\frac{|g(p_n)-g(q)|}{d(p_n,q)}  + (\dagger) &\leq 1+ \frac{ r_n \bigl(1+\frac{1}{1-\beta_n}\bigr)d(p_n,q_n)}{d(p_n,q)} \\
&\leq 1+ \frac{r_n}{r_n+\eps_n} \left(1 + \frac{1}{1-\beta_n}\right)  \\
&\leq 1 + \beta_n \left(1 + \frac{1}{1-\beta_n}\right) < 1 + \frac{1}{1-\beta_n}.
\end{align*}}
Similarly, if $p \in M_n \setminus \{p_n,q_n\}$, then
$$
\frac{|h_n(p)-h_n(q_n)|}{d(p,q_n)} \leq 1+ \beta_n \left(1 + \frac{1}{1-\beta_n}\right) < 1+ \frac{1}{1-\beta_n}.
$$
Therefore, $h_n \in \SNA(M_n)$ at $m_{p_n,q_n}$ for every $n \in \mathbb{N}$. By Mcshane extension, we may extend each $h_n$ to $g_n$ on $M$ for each $n \in \N$. Note that $\supp (g_n-g)$ is contained in $U_n$ for every $n\in\mathbb{N}$. In particular, $\supp (g_n-g) \cap \supp (g_m-g)=\emptyset$ for $n\neq m$. Thus, by \cite[Lemma 1.5]{CCGMR}, $(g_n)$ converges weakly to $g$.  

Since $g_n \in \SNA(M)$ at $m_{p_n,q_n}$ and $m_{p_n,q_n}$ is assumed to be a strongly exposed point of $B_{\mathcal{F}(M)}$, each $g_n$ belongs to the norm closure of $\LipSE (M)$ by Lemma \ref{lemma:LipSE-ASE}. It follows that there exists $(f_n)$ in $\LipSE(M)$ such that $\|f_n-g_n\| \to0$. Thus, $(f_n)$ converges weakly to $g$. 
\end{proof}

The following result shows that the assumptions in Theorem \ref{theorem:peak-ball-weak-dense} are weaker than those in \cite[Lemma 4.2]{CCGMR} together with the condition $m_{p_n,q_n} \in \text{str-exp}(B_{\F(M)})$, and moreover, even when $\frac{d(p_n,q_n)}{r_n} \to 0$ (which was required only for the norm convergence of $\|f_n\|$)  is replaced by $\sup_n \frac{d(p_n,q_n)}{r_n} < 1$.

\begin{cor}\label{cor:disjoint-ball-p_n}
Let $M$ be a complete metric space. Suppose that there exists a sequence $B(p_n, r_n)$ of disjoint balls of $M$ and a sequence $(q_n)$ in $M$ such that $0<d(p_n,q_n) <r_n$ with $\sup_n \frac{d(p_n,q_n)}{r_n} < 1$. If $m_{p_n,q_n} \in \str (B_{\mathcal{F}(M)})$ for each $n \in \mathbb{N}$, then $\LipSE(M)$ is weakly sequentially dense in $\Lip(M)$.
\end{cor}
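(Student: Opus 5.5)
We deduce the corollary from Theorem \ref{theorem:peak-ball-weak-dense} by choosing the parameters there so that its neighbourhoods $U_n$ sit inside the given balls $B(p_n,r_n)$. Put $d_n := d(p_n,q_n)$ and $s := \sup_n d_n/r_n < 1$. Hypothesis (i) of Theorem \ref{theorem:peak-ball-weak-dense} is exactly the assumption $m_{p_n,q_n}\in\str(B_{\F(M)})$. It remains to choose $r_n'\in[0,1]$ and $\eps_n\in(0,1)$ (renaming the theorem's $r_n$ to $r_n'$ to avoid a clash) satisfying (ii) and (iii).

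The natural choice is a point near $p_n$, splitting the segment unevenly. Try $r_n' := \eps_n$ with a single constant $\eps_n=\eps\in(0,1)$ to be fixed. Note the proof of the theorem used $\eps_n\le r_n'$, so this choice is compatible with that proof. Then
\[
\beta_n=\max\Bigl\{\tfrac12,\ \tfrac{1-\eps}{1}\Bigr\}=\max\{1/2,1-\eps\},
\]
which is uniformly below $1$, so (ii) holds. With this choice,
\[
U_n = B(p_n,2\eps d_n)\cup B(q_n,d_n).
\]
Every point of $B(q_n,d_n)$ lies within distance $2d_n$ of $p_n$. Thus $U_n\subseteq B(p_n,2d_n)$ once $\eps\le 1$. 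We need $2d_n\le r_n$, which requires $s\le 1/2$, and that is not given in general.

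This is the main obstacle: the ball around $q_n$ has radius at least $(1-r_n'+\eps_n)d_n\ge \eps_n d_n$ beyond $d_n$ on the $q_n$ side. So I would instead shrink the $q_n$-ball by putting the weight at $q_n$. Take $r_n':=1-\eps$, so the radius at $q_n$ is $2\eps d_n$ and the radius at $p_n$ is $(1-\eps+\eps)d_n=d_n$. Any point of $U_n$ is then within $\max\{d_n,\ d_n+2\eps d_n\}=(1+2\eps)d_n$ of $p_n$. Choosing $\eps>0$ small enough that $(1+2\eps)s<1$ gives $U_n\subseteq B(p_n,(1+2\eps)d_n)\subseteq B(p_n,r_n)$.

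The balls $B(p_n,r_n)$ are pairwise disjoint, so the $U_n$ are too, which gives (iii). We also still have $\beta_n=\max\{1-\eps,1/2\}<1$ uniformly, which gives (ii).

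One detail to check is that the proof of Theorem \ref{theorem:peak-ball-weak-dense} used the inequality $\eps_n\le r_n'$. Here this reads $\eps\le 1-\eps$, which holds for $\eps\le 1/2$. The symmetric estimate for the $q_n$ side uses $\eps_n\le 1-r_n'$, i.e.\ $\eps\le\eps$, which is fine. With (i)--(iii) verified, Theorem \ref{theorem:peak-ball-weak-dense} yields that $\LipSE(M)$ is weakly sequentially dense in $\Lip(M)$.
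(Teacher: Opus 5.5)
Your proposal is correct and takes the same approach as the paper. Both reduce to Theorem \ref{theorem:peak-ball-weak-dense} with constant parameters chosen so that $U_n\subseteq B(p_n,r_n)$: the paper uses $r_n'=\frac{1+\alpha}{2}$, $\eps_n'=\frac{1-\alpha}{4}$, while you use $r_n'=1-\eps$, $\eps_n=\eps$ with $\eps\le 1/2$ and $(1+2\eps)s<1$, and your explicit check of $\eps_n\le r_n'$ and $\eps_n\le 1-r_n'$ (used silently in the theorem's proof) is a worthwhile addition that the paper's choice also satisfies but does not mention.
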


\begin{proof}
    We will show that the assumptions imply those in Theorem \ref{theorem:peak-ball-weak-dense}. Given such sequences $(p_n)$, $(q_n)$ in $M$ and $(r_n)$, write $\alpha := \sup_n \frac{d(p_n,q_n)}{r_n} <1$. Consider the sequences $p_n' = p_n$, $q_n' = q_n$, $r_n' = \frac{1 + \alpha}{2}$ and $\eps_n' = \frac{1 - \alpha}{4}$ for each $n \in \N$. It is clear that
$$
\sup_n \beta_n = \max \left\{ \dfrac{r_n'}{r_n'+\eps_n'} , \dfrac{1-r_n'}{(1-r_n')+\eps_n'} \right\} = \dfrac{\,\,\frac{1+\alpha}{2}\,\,}{\,\,\frac{3+\alpha}{4}\,\,} <1.
$$
Moreover, $U_n$ and $U_m$ are mutually disjoint for all $n \neq m$ since $U_n \subseteq B(p_n,r_n)$ for all $n \in \N$ by construction. Indeed, we have $B(p_n',\bigl( \frac{3+\alpha}{4}\bigr)d(p_n',q_n')) \subseteq B(p_n,r_n)$ and also that $B(q_n',\bigl(\frac{3-3\alpha}{4}\bigr)d(p_n',q_n')) \subseteq B(p_n,r_n)$ since
$$
d(p_n',q_n') + \frac{3-3\alpha}{4} \cdot d(p_n',q_n') \leq \dfrac{7-3\alpha}{4} \cdot \alpha r_n < r_n.
$$
This proves the claim as $((p_n',q_n'))$, $(r_n')$ and $(\eps_n')$ are the desired sequences in Theorem \ref{theorem:peak-ball-weak-dense}.
\end{proof}

Let us highlight with the following example that the assumptions in Theorem \ref{theorem:peak-ball-weak-dense} are not equivalent to those in Corollary \ref{cor:disjoint-ball-p_n} in general.

\begin{example}
Let $M = \{(u,v) \in [0,1]^2 : uv = 0\} \subseteq \mathbb{R}^2$. Then, there does not exist a sequence $(B(p_n,r_n))_n$ of disjoint balls of $M$ and a sequence $(q_n)$ of points of $M$ as in Corollary \ref{cor:disjoint-ball-p_n}. Indeed, in order to make $m_{p_n,q_n}$ a strongly exposed point, $p_n$ and $q_n$ must be in the different segment among $[0,1] \times \{0\}$ and $\{0\} \times [0,1]$. However, it follows that $B(p_n,q_n)$ must contain $0$, so that the remaining pairs $p_n$ and $q_n$ lie in the segment, which is a contradiction. \\
On the other hand, it is routine to see that Theorem \ref{theorem:peak-ball-weak-dense} can be applied to $M$ in contrast. So, $\LipSE(M)$ is weakly sequentially dense in $\Lip(M)$.
\end{example}

\subsection{Proof of Theorem \ref{theorem:derived_set}}
Before proving Theorem \ref{theorem:derived_set}, we establish the following \emph{closer pair lemma}, which will be used repeatedly.

\begin{lemma}[Closer Pair Lemma]\label{lem:making_points_closer}
    Let $M$ be a complete metric space. If $m_{u,v} \not\in \str (B_{\mathcal{F}(M)})$, then there exists $z_1 \in M\setminus \{u,v\}$ and a pair $(p_1,q_1)$ among $(u,z_1)$ and $(z_1, v)$ such that 
    \begin{equation}\label{eq:making_points_closer}
    \max \{d(z_1, u), d(z_1, v)\} \leq d(u,v) \quad \text{ and } \quad d(p_1,q_1) \leq \frac{3}{4}d(u,v).
    \end{equation}
\end{lemma}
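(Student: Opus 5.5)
We will use the characterization \eqref{iff_strexp_Z}: since $M$ is complete and $m_{u,v}\notin\str(B_{\F(M)})$, the pair $(u,v)$ has property (Z). So for every $\eps>0$ there is $r\in M\setminus\{u,v\}$ with
\[
d(u,r)+d(r,v)-d(u,v)\le \eps\min\{d(u,r),d(r,v)\}.
\]
The plan is to apply this with one small fixed $\eps$ and take $z_1:=r$. Then we check the two inequalities in \eqref{eq:making_points_closer}, choosing $(p_1,q_1)$ to be whichever of $(u,z_1)$, $(z_1,v)$ has the smaller distance.

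Write $a=d(u,z_1)$, $b=d(z_1,v)$, $D=d(u,v)$. By the triangle inequality $a+b\ge D$, and property (Z) gives $a+b\le D+\eps\min\{a,b\}$.

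For the first inequality, suppose without loss of generality $a\ge b$. Then $a\le D+\eps b-b=D-(1-\eps)b\le D$ as long as $\eps\le 1$. Hence $\max\{a,b\}\le D$.

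For the second inequality, put $m=\min\{a,b\}$. From $2m\le a+b\le D+\eps m$ we get $m\le D/(2-\eps)$. Taking $\eps=2/3$ gives $m\le \tfrac34 D$. So the pair realizing the minimum has $d(p_1,q_1)\le\tfrac34 d(u,v)$.

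There is no real obstacle here. The only point needing care is that \eqref{iff_strexp_Z} requires completeness, which is assumed, and that $z_1\ne u,v$, which property (Z) guarantees. The argument is essentially the observation that property (Z) supplies an approximate midpoint-type point $z_1$, and any such point splits $[u,v]$ into two pieces, one of which is at most about half the length.
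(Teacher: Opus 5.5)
Your proof is correct and is the paper's argument: use \eqref{iff_strexp_Z} to get property (Z) for $(u,v)$, fix a small $\eps$, and read both inequalities off $a+b\le D+\eps\min\{a,b\}$. The only differences are that the paper takes $\eps=1/2$ (so the shorter piece is at most $\tfrac23 d(u,v)$) where you take $\eps=2/3$ (giving exactly $\tfrac34 d(u,v)$), and that you write out the elementary verification the paper leaves implicit.
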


\begin{proof}
Since $(u,v)$ has the property (Z) (see \eqref{iff_strexp_Z}), there exists $z_1 \in M \setminus \{u,v\}$ such that 
\begin{equation}\label{eq:Gromov_condition_1}
    d(u,z_1) + d(z_1,v) - d(u,v) < \frac{1}{2} \min\{d(u,z_1), d(z_1,v) \}.
\end{equation}
In particular, \eqref{eq:Gromov_condition_1} implies that 
\begin{equation}\label{eq:Gromov_conseqeunces_1}
    d(u,z_1) \leq d(u,v) \quad \text{ and } \quad  d(z_1,v) \leq d(u,v).
\end{equation}
Moreover, \eqref{eq:Gromov_condition_1} implies that either $d(u,z_1)$ or $d(z_1,v)$ is smaller than or equal to $\frac{3}{4} d(u,v)$. Choose a pair $(p_1,q_1)$ among $(u,z_1)$ and $(z_1, v)$ such that $d(p_1, q_1) \leq \frac{3}{4} d(u,v)$. 
\end{proof}

\begin{proof}[Proof of Theorem \ref{theorem:derived_set}]

\textit{Case 1}. $M'$ is empty. In this case, we distinguish two subcases.

\textit{Case 1-1.} $M$ is uniformly discrete. 

Then $\mathcal{F}(M)$ has the RNP; so $\LipSE (M)$ is norm dense in $\Lip (M)$.

\textit{Case 1-2}. $M$ is discrete but not uniformly discrete. 

Using Lemma \cite[Lemma 4.3]{CCGMR}, given $k\geq 2$ and $\varepsilon>0$, find $u,v \in M$ such that $0<d(u,v) \leq \varepsilon$ and 
\begin{equation}\label{eq:outside_ball_non_uniformly_discrete}
    \text{$M\setminus B(u, (k+1)d(u,v))$ is not uniformly discrete.}
\end{equation}

\textbf{Claim:} There exist $p,q \in M$ such that 
\begin{enumerate}[label=(\roman*)]
\itemsep0.25em
    \item $m_{p,q} \in \str (B_{\mathcal{F}(M)})$; \label{item:pq_str} 
    \item $d(p,q) \leq d(u,v)$ and $d(p,u) \leq d(u,v)$; \label{item:pq_less_than_uv}
    \item $M \setminus B(p, k d(p,q))$ is not uniformly discrete. \label{item:not_unif.discrete}
\end{enumerate}

If $m_{u,v}$ belongs to $\str (B_{\mathcal{F}(M)})$, then taking $(p,q) =(u,v)$ proves the claim. Suppose otherwise. Then by Lemma \ref{lem:making_points_closer}, there exists $z \in M \setminus \{u,v\}$ and a pair $(p_1,q_1)$ among $(u,z_1)$ and $(z_1, v)$ satisfying \eqref{eq:making_points_closer}.

If $m_{p_1,q_1} \in \str (B_{\mathcal{F}(M)})$, then $(p,q)=(p_1,q_1)$ satisfies the conditions \ref{item:pq_str}, \ref{item:pq_less_than_uv} in the claim.

If not, i.e., $m_{p_1,q_1} \notin \str (B_{\mathcal{F}(M)})$ we repeat the same argument. Thus, at the $n$-th step, either we obtain a pair $(p_n,q_n)$ such that 
\[
m_{p_n,q_n} \in \str (B_{\mathcal{F}(M)}) \quad \text { and } \quad d(p_n,q_n)\leq d(u,v),
\]
or we produce a new pair $(p_{n+1},q_{n+1})$ such that $d(p_{n+1},q_{n+1}) \leq \frac{3}{4}d(p_n,q_n)$.

If the process does not terminate, then we obtain a sequence $(p_n,q_n)$ of pairs of points in $M$. Since each new pair shares one endpoint with the preceding pair, we may choose a sequence $(w_n)$ with $w_n \in \{p_n,q_n\}$ such that 
\[
d(w_n, w_{n+1}) \leq d(p_n,q_n) \leq \left(\frac{3}{4}\right)^{\!n} \! d(u,v),
\]
and so $(w_n)$ is Cauchy. Hence the sequence $(w_n)$, which contains infinitely many distinct points, converges to some point of $M$. This point is then a cluster point of $M$, contradicting the assumption that $M'$ is empty. Therefore, the process must terminate after finitely many steps, yielding the pair $(p,q)$ satisfying \ref{item:pq_str} and  \ref{item:pq_less_than_uv}. Condition \ref{item:not_unif.discrete} follows directly from \eqref{eq:outside_ball_non_uniformly_discrete} once we verify the inclusion
\[
B(p, k d(p,q)) \subseteq B(u, (k+1) d(u,v)).
\]
Indeed, if $x \in B(p, k d(p,q))$, then 
\[
d(x,u) \leq d(x,p) + d(p,u) \leq kd(p,q) + d(u,v) \leq (k+1)d(u,v);
\]
so $x \in B( u, (k+1) d(u,v))$ and the claim has been proved. 

By applying the claim inductively, we find sequences $(p_n)$, $(q_n)$ in $M$ such that for every $n \in \mathbb{N}$, we have $m_{p_n,q_n} \in \text{str-exp}(B_{\F(M)})$, the set $M \setminus \bigcup_{m=1}^n B(p_m , 2m d(p_m,q_m))$ is discrete but not uniformly discrete, $p_{n+1}, q_{n+1} \in M \setminus \bigcup_{m=1}^n B(p_m, 2md(p_m,q_m))$, and that 
\[
d(p_{n+1}, q_{n+1} ) \leq \min \left\{ \frac{n}{n+1} d(p_n,q_n), n^{-2} \right\}.
\]
It is routine to check that the balls $\{B(p_n,n d(p_n, q_n) )\} $ are pairwise disjoint and satisfy the assumption of Corollary \ref{cor:disjoint-ball-p_n}. This completes the Case 1-2.
\medskip

\textit{Case 2}. $M'$ is non-empty and finite. Suppose that $M' = \{a_1,\ldots, a_k\}$. Moreover, we may assume that $a_1 =0$. Given $\varepsilon>0$, denote by $E_\varepsilon:= \bigcup_{i=1}^k [M \setminus B(a_i,\varepsilon)]$. 

\textit{Case 2-1}. $E_\varepsilon$ is finite for every $\varepsilon>0$. In this case, $M$ is compact and countable. So, $\mathcal{F}(M)$ has the RNP. 

\textit{Case 2-2}. Thus, we may assume that there exists $0<\varepsilon_0 < \frac{1}{4} \min_{i\neq j} d(a_i, a_j)$ such that $E_{\varepsilon_0}$ is infinite. Note that $E_\varepsilon$ is discrete for every $\varepsilon>0$. If there is $0<\varepsilon\leq \varepsilon_0$ such that $E_\varepsilon$ is not uniformly discrete in $M$, then the same argument as in Case 1-2 yields a sequence of pairwise disjoint balls to which Corollary \ref{cor:disjoint-ball-p_n} applies. Thus, we may also assume that $E_\varepsilon$ is infinite and uniformly discrete in $M$ for every $0<\varepsilon\leq \varepsilon_0$. By rescaling the metric space, we may assume that $\varepsilon_0 = 2^{-1}$. For $n \in \mathbb{N}$ and $i \in \{1,\ldots, k\}$, let us denote $C_n^i := E_{(n+1)^{-1}} \cap B(a_i, n^{-1})$ and 
\[
\alpha_n^i := \inf \{ d(x, M\setminus \{x\} ) : x \in C_n^i \}, 
\]
with the convention that $\inf \emptyset = +\infty$. 

If $\liminf_{n\to\infty} n\alpha_n^i>0$ for every $i\in\{1,\ldots,k\}$, then the proof of \cite[Theorem 4.1]{CCGMR} shows that $\mathcal F(M)$ has the RNP. Thus, we assume that there exists $i \in \{1,\ldots, k\}$ such that $\liminf_{n\to\infty} n \alpha_n^i = 0$. 
\medskip

\emph{Step 1}. We claim that there exist sequence $(j_n)$ in $\mathbb{N}$ and sequences $(p_n)$, $(q_n)$ in $M$ such that 
\begin{enumerate}[label=(\alph*)]
    \itemsep0.25em 
    \item $3n d(p_n,q_n) < \left ( \frac{3n-1}{3n} \right) (j_n + 1)^{-1}  $ for every $n$;\label{item:condition_dp_nq_n}
    \item $4 j_{n+1}^{-1} < \left ( \frac{3n-1}{3n} \right)  (j_n +1 )^{-1}   - 3n d(p_n,q_n) $  for every $n$; \label{item:condition_j_n+1}
    \item $p_n \in C_{j_n}^i$ for every $n$. \label{item:condition_p_n}
\end{enumerate}

Indeed, notice first that $\liminf_{n\to\infty} \left( \frac{18n^2}{3n-1}\right) \alpha_n^i=0.$ Take $j_1 \geq 3$ such that $6j_1 \alpha_{j_1}^i < 1$. Then there exists $p_1 \in C_{j_1}^i$ such that 
\[
3 d(p_1, M\setminus \{p_1\}) < 2^{-1} j_1^{-1} \leq \frac{2}{3}\, (j_1 + 1)^{-1} .
\]
Thus, there is $q_1 \in M$ such that $3 d(p_1, q_1) < \frac{2}{3} (j_1 + 1)^{-1} .$ Now, assume that we have defined $p_n$, $q_n$ and $j_n$, and let us define $p_{n+1}$, $q_{n+1}$ and $j_{n+1}$. By condition \ref{item:condition_dp_nq_n}, we can take $j_{n+1}\in\mathbb{N}$ such that 
\[
4j_{n+1}^{-1} < \left ( \frac{3n-1}{3n} \right) (j_n + 1)^{-1}  - 3n d(p_n, q_n) \,\,\,  \text{and} \,\,\, \left(\frac{18(n+1)^2}{3(n+1)-1} \right) j_{n+1} \alpha_{j_{n+1}}^i < 1.
\]
Then there are $p_{n+1} \in C_{j_{n+1}}^i $ and $q_{n+1} \in M$ such that 
\[
3(n+1) d(p_{n+1},q_{n+1}) < \left(\frac{3(n+1)-1}{6(n+1)} \right) j_{n+1}^{-1} \leq \left ( \frac{3(n+1)-1}{3(n+1)} \right)(j_{n+1} + 1)^{-1} .
\]
This completes the construction of the sequences $(p_n)$, $(q_n)$ and $(j_n)$. 
\medskip

\emph{Step 2}. We claim that there exist $(u_n)$, $(v_n)$ in $M$ such that 
\begin{enumerate}[label=(\alph*),start=4]
    \itemsep0.25em 
    \item $d(u_n,p_n) \leq d(p_n, q_n)$ for every $n$;\label{item:du_np_n}
    \item $m_{u_n, v_n} \in \str (B_{\mathcal{F}(M)})$ for every $n$. \label{item:mu_nv_n}
\end{enumerate}
To prove the claim, fix $n \in \mathbb{N}$. If $m_{p_n,q_n} \in \str (B_{\mathcal{F}(M)})$, then $(u_n,v_n)=(p_n,q_n)$ works. If not, apply Lemma \ref{lem:making_points_closer} so that we  find $z_1 \in M \setminus \{p_n,q_n\}$ and a pair $(r_1,s_1)$ among $(p_n,z_1)$ and $(z_1, q_n)$ satisfying \eqref{eq:making_points_closer}. 

 Note that 
\[
\max \{ d(p_n,z_1), d(z_1, q_n)\}  \leq d(p_n,q_n) < \left( \frac{3n-1}{9n^2} \right)(j_n + 1 )^{-1},
\]
where the last inequality follows from \ref{item:condition_dp_nq_n}. Thus, 
\begin{equation}\label{eq:upper_bound}
d(r_1, a_i) \leq d(r_1, p_n) + d(p_n, a_i) < \left( \frac{3n-1}{9n^2} \right)(j_n + 1 )^{-1} + j_n^{-1}
\end{equation}
and 
\begin{equation}\label{eq:lower_bound}
d(r_1,a_i) \geq d(p_n,a_i) - d(r_1, p_n) > (j_{n}+1)^{-1} -  \left( \frac{3n-1}{9n^2} \right)(j_n + 1 )^{-1}
\end{equation}
since $p_n \in C_{j_n}^i$. 
If $m_{r_1,s_1} \in \str(B_{\mathcal{F}(M)})$, then the pair $(u_n, v_n)=(r_1,s_1)$ satisfies \ref{item:du_np_n} and \ref{item:mu_nv_n}. If not, we repeat the same argument. Thus, at the $k$-th step, either we obtain a pair $(r_k, s_k)$ satisfying \ref{item:du_np_n} and \ref{item:mu_nv_n} or we produce a new pair $(r_{k+1}, s_{k+1})$ such that $d(r_{k+1}, s_{k+1})\leq \frac{3}{4} d(r_k,s_k)$ and the estimates \eqref{eq:upper_bound} and
\eqref{eq:lower_bound} remain valid after replacing $r_1$ by $r_{k+1}$.

If the process does not terminate, then we obtain a Cauchy sequence $(w_k)$, which contains infinitely many different points, with $w_k \in \{ r_k, s_k\}$. Since $d(w_k,r_k)\leq d(r_k,s_k) \to 0$ as $k\to\infty$, the sequences $(w_k)$ and $(r_k)$ have the same limit, say $w_\infty$. As each $r_k$ satisfies \eqref{eq:upper_bound} and
\eqref{eq:lower_bound}, we obtain that 
\begin{align}\label{eq:dw_infty_a_i}
0< \left( 1- \frac{3n-1}{9n^2} \right)(j_n + 1 )^{-1} \leq d(w_\infty, a_i) \leq \left( \frac{3n-1}{9n^2} \right)(j_n + 1 )^{-1} + j_n^{-1} \overset{(\star)}{\leq} \frac{7}{18} < \varepsilon_0,    
\end{align}
where $(\star)$ holds since $j_n \geq 3$, so 
\[
\left( \frac{3n-1}{9n^2} \right)(j_n + 1 )^{-1} + j_n^{-1}  \leq \left( \frac{2}{9} \right) \frac{1}{4} + \frac{1}{3} = \frac{7}{18}.
\]

On the other hand, being a cluster point of $M$, $w_\infty$ must belong to $M'=\{a_1,\ldots,a_k\}$, which contradicts \eqref{eq:dw_infty_a_i}.

It follows that the process must terminate after finitely many steps, so we obtain the pair $(u_n, v_n)$ that satisfies \ref{item:du_np_n} and \ref{item:mu_nv_n}. This completes the proof of Step 2. Notice that \eqref{eq:upper_bound} and \eqref{eq:lower_bound} hold with $r_1$ replaced by $u_n$, that is, 
\begin{equation}\label{eq:upper_lower_bound2}
\left( 1- \frac{3n-1}{9n^2} \right)(j_n + 1 )^{-1}  < d(u_n, a_i)  < \left( \frac{3n-1}{9n^2} \right)(j_n + 1 )^{-1} + j_n^{-1}
\end{equation}
\medskip

\emph{Step 3}. We claim that 
\begin{equation}\label{eq:disjoint_balls_un_ai}
B(u_n, 3nd(p_n,q_n)) \cap B (a_i, 4j_{n+1}^{-1} ) = \emptyset.    
\end{equation}
Indeed, if the set is non-empty, then this implies that 
\[
d(u_n, a_i) \leq 3nd(p_n,q_n) +  4j_{n+1}^{-1} < \left ( 1-\frac{1}{3n} \right)  (j_n +1 )^{-1}
\]
where the second inequality follows from \ref{item:condition_j_n+1}. This contradicts \eqref{eq:upper_lower_bound2} since 
\[
d(u_n, a_i) > \left( 1- \frac{3n-1}{9n^2} \right)(j_n + 1 )^{-1} = \left( 1- \frac{1}{3n} +\frac{1}{9n^2} \right)(j_n + 1 )^{-1}.
\]
Moreover, if $m>n$, then 
\begin{align*}
    &B(u_m , 3m d(p_m, q_m) ) \\
    &\quad\subseteq B\left( u_m, \left( \frac{3m-1}{3m} \right) (j_m +1 )^{-1} \right) \\ 
    &\quad\subseteq B\left(a_i, \left( \frac{3m-1}{3m} \right) (j_m +1 )^{-1} + \left( \frac{3m-1}{9m^2} \right)(j_m + 1 )^{-1} + j_m^{-1} \right) \\
    &\quad\subseteq B\left (a_i, \left( 2-\frac{1}{9m^2} \right)j_m^{-1} \right) \subseteq B(a_i, 2 j_{n+1}^{-1} ) \subseteq B(a_i, 4 j_{n+1}^{-1} ).
\end{align*}
This, combined with \eqref{eq:disjoint_balls_un_ai}, shows that 
\[
B(u_m, 3m d(p_m, q_m) )  \cap B(u_n, 3nd(p_n,q_n)) = \emptyset
\]
whenever $n<m$. Therefore, we can apply Corollary \ref{cor:disjoint-ball-p_n} to obtain that $\LipSE (M)$ is weakly sequentially dense. This completes Case 2-2.
 \end{proof}

\subsection{Negative results}

We have already observed that $\LipSE(M)$ may be trivial (for instance, when $M$ is a length space). The following results show that being non-length is not sufficient to guarantee that $\LipSE(M)$ is weakly sequentially dense in $\Lip(M)$.

\begin{prop}
Let $N$ be a metric space and $E$ a finite metric space. Let $M = N \bigsqcup E$ be the metric union of $N$ and $E$. If  $\LipSE(M)$ is weakly sequentially dense in $\Lip(M)$, then $\LipSE(N)$ is weakly sequentially dense in $\Lip(N)$.
\end{prop}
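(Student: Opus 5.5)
The plan is to pass from $M$ to $N$ by restriction. Recall that in the metric union $M=N\bigsqcup E$ the two pieces are glued at the common base point $0$, so $d(x,y)=d(x,0)+d(0,y)$ for $x\in N$ and $y\in E$, and $\F(M)=\F(N)\oplus_1\F(E)$.

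\emph{Reduction to $g\neq 0$.} Since $\LipSE(N)$ is a cone, the case $g=0$ follows from the nonzero case. Pick any $h\neq 0$ in $\Lip(N)$ and a sequence $(\varphi_n)\subseteq \LipSE(N)$ with $\varphi_n\xrightarrow{w} h$. Weakly convergent sequences are bounded, so $\|\varphi_n/n\|\to 0$, and hence $\varphi_n/n\xrightarrow{w}0$.

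\emph{Extension and restriction.} Fix $g\in\Lip(N)$ with $g\neq 0$. Let $\tilde g\in\Lip(M)$ be its extension by $0$ on $E$; it has the same Lipschitz norm, by the $\oplus_1$ decomposition. By hypothesis there are $f_n\in\LipSE(M)$ with $f_n\xrightarrow{w}\tilde g$. The restriction maps $R_N:\Lip(M)\to\Lip(N)$ and $R_E:\Lip(M)\to\Lip(E)$ are bounded linear maps, namely adjoints of the canonical inclusions $\F(N),\F(E)\hookrightarrow\F(M)$. They are therefore weak-to-weak continuous, so $f_n|_N\xrightarrow{w} g$ and $f_n|_E\xrightarrow{w}0$. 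Since $E$ is finite, $\Lip(E)$ is finite-dimensional, so $\|f_n|_E\|\to 0$. By weak lower semicontinuity of the norm, $\liminf_n\|f_n|_N\|\geq\|g\|>0$. Hence for all large $n$,
\[
\|f_n|_E\|<\|f_n|_N\|=\max\{\|f_n|_N\|,\|f_n|_E\|\}=\|f_n\|,
\]
where the equalities use $\F(M)=\F(N)\oplus_1\F(E)$.

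\emph{Main step: $f_n|_N\in\LipSE(N)$ for large $n$.} Suppose $f_n$ strongly exposes $m_{p,q}$. The key claim is that $p,q$ lie in the same piece. If $p\in N$ and $q\in E$, then
\[
m_{p,q}=\tfrac{d(p,0)}{d(p,q)}\,m_{p,0}+\tfrac{d(0,q)}{d(p,q)}\,m_{0,q}
\]
is a nontrivial convex combination, so it is not extreme and hence not strongly exposed. This contradicts $f_n$ norming it in the ASE sense (Theorem \ref{theorem:Lip-linear} together with the fact that an ASE functional direction norms a strongly exposed point). If $p,q\in E$, then $\|f_n\|=\|f_n(m_{p,q})\|\le\|f_n|_E\|<\|f_n\|$, which is impossible. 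Thus $p,q\in N$ and $\|f_n|_N\|=\|f_n\|$. Any sequence of molecules of $N$ that is almost norming for $f_n|_N$ is also a sequence of molecules of $M$ almost norming for $f_n$. It therefore has a subsequence converging to $\pm m_{p,q}$, which is a molecule of $N$, so $f_n|_N\in\LipSE(N)$. Discarding finitely many $n$ then gives the required sequence in $\LipSE(N)$ converging weakly to $g$.

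I expect the main obstacle to be ruling out an exposing molecule that straddles $N$ and $E$. The convex-combination argument above should settle it, using that strongly exposed points of $B_{\F(M)}$ are extreme.
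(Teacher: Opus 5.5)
Your proof is correct and follows essentially the same route as the paper. Both split $f_n$ into its $N$- and $E$-parts via $\mathcal{F}(M)=\mathcal{F}(N)\oplus_1\mathcal{F}(E)$, use finite-dimensionality of $\Lip(E)$ together with weak lower semicontinuity of the norm to get $\|f_n\|=\|f_n|_N\|$ eventually, and then observe that norming sequences for $f_n|_N$ are norming sequences for $f_n$. The differences are minor: the paper places the exposed molecule in $N$ implicitly, since the limit of a sequence in the closed subspace $\mathcal{F}(N)\oplus\{0\}$ stays there, whereas you rule out straddling pairs by extremality and $E$-pairs by the norm gap; you also handle $g=0$ by scaling, which the paper leaves unaddressed.
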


\begin{proof}
Fix a non-zero $h \in \Lip (N)$. Then $f := (h,0)$ can be approximated weakly by a sequence $(g_n) \subset \LipSE (M)$. By Theorem \ref{theorem:Lip-linear} (and the observation in the paragraph preceding subsection \ref{subsection:isomorphic}), we have 
$$
T_{g_n} \in \text{SE}(\F(N) \oplus_1 \F(E)) \subseteq \F(N)^* \oplus_\infty \F(E)^*.
$$ 
Write $T_{g_n} = (x_n^*,y_n^*)$, where $x_n^* \in \mathcal{F}(N)^*$ and $y_n^* \in \mathcal{F}(E)^*$ for each $n\in\mathbb{N}$.  Since $\mathcal{F}(E)$ is finite-dimensional and $y_n^* \xrightarrow{w} 0$, the sequence $(\|y_n^*\|)$ converges to $0$. Moreover, since $(x_n^*)$ is weakly bounded, passing to a subsequence, we assume that $\|x_n^*\| \to \beta$ for some $\beta>0$. By passing to a subsequence if necessary, we assume that $\|x_n^*\| >\beta/2$ and $\|y_{n}^*\| < \beta/2$ for all $n\in\mathbb{N}$. This implies that $\|g_n\| = \|x_n^*\|$ for every $n\in\mathbb{N}$. 

Let $n \in \mathbb{N}$ be fixed and $m_{p_n,q_n}$ be the point strongly exposed by $T_{g_n}$. If $(\mu_k) \subset B_{\mathcal{F}(N)}$ satisfies that $x_n^* (\mu_k) \to \|x_n^*\|$ as $k \to \infty$, then 
\[
\lim_{k\to\infty}T_{g_n} (\mu_k,0)= \lim_{k\to\infty} x_n^*(\mu_k) \to \|x_n^*\| =\|g_n\|.
\]
Thus, we can find a subsequence $(\mu_{j_k})$ and $\theta \in \{-1,1\}$ such that $(\mu_{j_k},0) \to \theta m_{p_n,q_n}$. It follows that $x_n^* \in \SE ( \mathcal{F}(N))$. Since $x_n^* \xrightarrow{w} h$ and $h$ was arbitrary, the proof is complete.
\end{proof}

\begin{cor}\label{cor:length-finite}
    Let $M=L \bigsqcup E$ be the metric union of $L$ and $E$, where $L$ is a length metric space and $E$ is a finite metric space. Then $\LipSE (M)$ is not weakly sequentially dense in $\Lip(M)$.
\end{cor}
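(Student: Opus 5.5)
By the preceding Proposition with $N = L$, it suffices to show that $\LipSE(L)$ is not weakly sequentially dense in $\Lip(L)$. Indeed, if $\LipSE(M)$ were weakly sequentially dense in $\Lip(M)$ for $M = L \bigsqcup E$, then $\LipSE(L)$ would be weakly sequentially dense in $\Lip(L)$.

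For the length space $L$ we use \eqref{iff_strepx_Z2}. Every length space has property \textup{(Z)}, so $\LipSE(L)=\{0\}$. Weak sequential density of $\{0\}$ in $\Lip(L)$ would force $\Lip(L)=\{0\}$, because the weak sequential closure of $\{0\}$ is $\{0\}$ itself.

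This leaves the degenerate case. If $L$ contains a point other than the base point, then $\Lip(L)$ contains a nonzero function, for example $x\mapsto d(x,0)$, and the argument above is complete.

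Some care is needed in two places.

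\emph{Where the base point sits.} The metric union is pointed, so $L$ and $E$ share the distinguished point $0$. The Proposition already accounts for this through the decomposition $\F(M)=\F(L)\oplus_1\F(E)$. We therefore only have to check that the identification in \eqref{iff_strepx_Z2} is applied to $L$ with its own base point. The base point is irrelevant there, because property \textup{(Z)} is a condition on pairs of points.

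\emph{Completeness.} The result \eqref{iff_strexp_Z} is stated for complete $M$. If $L$ is not complete, we pass to its completion $\overline{L}$, which is again a length space. Then $\F(L)=\F(\overline{L})$, and the molecules of $L$ form a subset of those of $\overline{L}$, so no strongly exposed molecule of $L$ can exist and $\LipSE(L)=\{0\}$ still holds.

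The main obstacle is essentially bookkeeping: confirming that the Proposition applies verbatim with $N=L$, and that the trivial case $L=\{0\}$ is excluded by the standing convention that $L$ is a genuine, nontrivial length space.
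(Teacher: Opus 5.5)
Your argument is correct and follows the paper's intended route. The paper states the corollary as an immediate consequence of the preceding Proposition with $N=L$, combined with \eqref{iff_strepx_Z2}, which gives $\LipSE(L)=\{0\}$; this is weakly sequentially closed and proper once $L\neq\{0\}$. Your side remarks are also sound: you rightly note that $L$ must be nontrivial, a hypothesis the paper leaves implicit (for $L=\{0\}$ one has $M=E$ finite, where $\LipSE(M)$ is even norm dense), and your passage to the completion correctly handles the case where $L$ is not complete.
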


For the next result, let us comment that every proper length space (in particular, compact length space) is geodesic.

\begin{theorem}\label{theorem:compact-proper-length}
    Let 
    $$
    M = L_1 \cup \cdots \cup L_n
    $$ be a complete metric space, where $L_1, \ldots, L_{n-1}$ are compact length, $L_n$ is proper length, and $\dist (L_j, L_k) >0$ for every $j \neq k$. Then, $\LipSE (M)$ is not weakly sequentially dense in $\Lip(M)$.
\end{theorem}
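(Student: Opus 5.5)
The plan is to exhibit one $f\in\Lip(M)$ that is not a weak limit of any sequence in $\LipSE(M)$. The argument combines the metric description of strongly exposed molecules with the de Leeuw representation of $\Lip(M)$. By the definition of $\LipSE(M)$ and \eqref{iff_strexp_Z}, every $g\in\LipSE(M)$ strongly attains its norm at some $m_{p,q}\in\str(B_{\F(M)})$.

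\emph{Step 1: strongly exposed molecules join different components.} If $p,q$ lie in the same $L_j$, then $L_j$ is geodesic: proper length spaces are geodesic, as the paper notes, and this covers the compact $L_j$ too. A midpoint $r\in L_j$ of a geodesic from $p$ to $q$ gives $d(p,r)+d(r,q)=d(p,q)$, so $(p,q)$ has property (Z) in $M$ and $m_{p,q}\notin\str(B_{\F(M)})$. Hence every $g\in\LipSE(M)$ satisfies
\[
\|g\|=|g(m_{p,q})|\quad\text{for some } p\in L_j,\ q\in L_k,\ j\neq k .
\]

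\emph{Step 2: the test function.} Let $D:=\min_{j\neq k}\dist(L_j,L_k)>0$. Assume, after relabelling and using that the base point lies in some $L_{j_0}$, that $0\notin L_1$; this needs $n\geq 2$. (If the base point lies in $L_1$, apply the same construction in $L_1$ with $x_0$ chosen away from $0$, adjusted so that $f(0)=0$.)

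Fix $x_0\in L_1$ and a small $r>0$ with $r<D/2$ and $r<\operatorname{diam}(L_1)$. Set
\[
f:=\min\{d(\cdot,x_0),r\}\ \text{ on } L_1,\qquad f:=0\ \text{ on } L_2\cup\cdots\cup L_n .
\]
Then $f$ is bounded with bounded support. A geodesic in $L_1$ issuing from $x_0$ shows that $\|f\|=1$, and this value is attained by pairs $x_k\neq y_k$ with $d(x_k,y_k)\to 0$. For cross-component pairs we get the bound
\[
|f(m_{p,q})|\leq \frac{r}{D}<\frac12 .
\]

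\emph{Step 3: contradiction via the de Leeuw map.} Suppose $g_m\in\LipSE(M)$ and $g_m\xrightarrow{w} f$. Then $\sup_m\|g_m\|<\infty$. Embed $\Lip(M)$ isometrically into $C(\beta\widetilde M)$ via $g\mapsto \bigl((x,y)\mapsto g(m_{x,y})\bigr)$; weak convergence there forces pointwise convergence at every point of $\beta\widetilde M$.

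First take $\xi$ to be a cluster point of the local pairs $(x_k,y_k)$ from Step 2. Pointwise convergence at $\xi$ gives $\liminf_m\|g_m\|\geq |f(\xi)|=1$.

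On the other hand, Step 1 gives cross pairs $(p_m,q_m)$ with $\|g_m\|=|g_m(m_{p_m,q_m})|$. Pass to a subsequence and split into two cases.

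\begin{itemize}
\item If $p_m,q_m$ stay bounded, they lie in compact pieces and properness of $L_n$ applies, so $p_m\to p$ and $q_m\to q$ with $p,q$ still in different components. Then $\|m_{p_m,q_m}-m_{p,q}\|\to0$. Together with the uniform bound on $\|g_m\|$ and weak convergence, this gives $g_m(m_{p_m,q_m})\to f(m_{p,q})$, which has modulus $<1/2$.
\item Otherwise one endpoint escapes to infinity in $L_n$. Any cluster point $\eta\in\beta\widetilde M$ of $(p_m,q_m)$ then satisfies $f(\eta)=0$, since $|f(m_{p_m,q_m})|\leq r/d(p_m,q_m)\to0$ by boundedness of $f$. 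Pointwise convergence at $\eta$ gives $|g_m(m_{p_m,q_m})|\to 0$ along a subnet, which is enough to contradict $\liminf\|g_m\|\geq1$.
\end{itemize}

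In both cases $\|g_m\|$ is eventually below $1$, contradicting $\liminf_m\|g_m\|\geq1$.

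The main obstacle is Step 3. One must check that weak convergence in $\Lip(M)$ passes to every point of $\beta\widetilde M$, via the Rainwater/Lebesgue theorem for bounded sequences in $C(K)$ after the de Leeuw embedding. In the escaping case one must also pass carefully from subnets back to the sequence of norms: I would argue by contradiction on a subsequence where $\|g_m\|\geq 1-\eps$.
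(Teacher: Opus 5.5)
Your Steps 1 and 2 are fine, and so is the bounded case of Step 3. That case is essentially the paper's Cases 1 and 2-1: norm convergence $m_{p_m,q_m}\to m_{p,q}$ in $\F(M)$, together with $\sup_m\|g_m\|<\infty$, lets weak convergence pass to the moving molecules.

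The gap is in the escaping case. Pointwise convergence on $\beta\widetilde M$ is immediate, since point evaluations are continuous functionals; no Rainwater-type result is needed. But it only gives $\Phi g_m(\eta)\to\Phi f(\eta)=0$ at the \emph{fixed} point $\eta$.

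What you need is the diagonal quantity $g_m(m_{p_m,q_m})=\Phi g_m(\eta_m)$, where $\eta_m=(p_m,q_m)$ depends on $m$. Knowing $\eta_m\to\eta$ lets you replace $\eta_m$ by $\eta$ only if $\{\Phi g_m\}$ is equicontinuous at $\eta$, and nothing provides that. Escaping molecules also have no norm-convergent subsequence in $\F(M)$, so the device from the bounded case is unavailable.

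In fact the inference is false for weakly convergent sequences in general. In $\Lip(\mathbb{R})$ with base point $0$, take the tent functions $h_m$ of slopes $\pm1$ supported on $[4^m,2\cdot 4^m]$. They are bounded with pairwise disjoint supports, hence weakly null. Yet $|h_m(m_{0,q_m})|=1/3$ for $q_m=\frac32\cdot 4^m\to\infty$.

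Your escaping case uses neither that $g_m$ \emph{attains} its norm at $(p_m,q_m)$ (beyond $\|g_m\|=|g_m(m_{p_m,q_m})|$) nor that $L_n$ is geodesic. That is a sign something is missing.

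The missing idea, which is the paper's Case 2-2, is to use norm attainment along a geodesic in $L_n$ to move the norming increment onto a bounded pair. Say $q_m\in L_n$ escapes and $p_m$ lies in a compact component. Fix $x\in L_n$, put $\alpha:=\sup_m d(x,p_m)<\infty$, and for large $m$ choose $r_m$ on a geodesic from $x$ to $q_m$ with $d(x,r_m)=\alpha+D$. Then
\[
d(p_m,q_m)-d(q_m,r_m)\ \ge\ \bigl(d(x,q_m)-\alpha\bigr)-\bigl(d(x,q_m)-\alpha-D\bigr)\ =\ D,
\]
so
\[
|g_m(p_m)-g_m(r_m)|\ \ge\ \|g_m\|\bigl(d(p_m,q_m)-d(q_m,r_m)\bigr)\ \ge\ \|g_m\|D .
\]
Since $(p_m)$ and $(r_m)$ are bounded, compactness and properness give $p_m\to p_0$ and $r_m\to r_0$ along a subsequence. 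Your bounded-case reasoning, applied to $\delta_{p_m}-\delta_{r_m}\to\delta_{p_0}-\delta_{r_0}$ in norm, then yields
\[
|f(p_0)-f(r_0)|\ \ge\ D\liminf_m\|g_m\|\ \ge\ D .
\]
This contradicts the fact that your $f$ has oscillation at most $r<D/2$. With this replacement for the escaping case, your test function and the rest of your argument go through.
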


\begin{proof}
    Fix any $x_j \in L_j$ for each $j = 1, \ldots, n$, and let $\delta :=  \min_{j,k} d(L_j,L_k) > 0$. Define a Lipschitz function $f \in \Lip(M)$ by
    $$
    f(p) = \sum_{j=1}^n \max \left\{ \frac{\delta}{2} - d(p,x_j), 0\right\}.
    $$
    Then, $\|f\| = 1$ since each $L_j$ is length. By the construction of $f$, we have
    \begin{equation}\label{eq:disjoint_segment_small_values}
    |f(m_{u,v})| \leq \frac{1}{2}    
    \end{equation}
    for any $u \in L_j$ and $v \in L_k$ with $j \neq k$. Assume now that a sequence $(f_k) \subseteq \LipSE(M)$ converges weakly to $f$. Passing by a subsequence if necessary, there exists $\beta \geq 1$ such that $ \lim_k \|f_k\| = \beta \geq 1$. As each $f_k$ is strongly exposing, it must attain its norm at some $m_{p_k,q_k} \in \str(B_{\F(M)})$ where $p_k$ and $q_k$ do not lie in the same $L_j$. Without loss of generality, we will only consider the case when $p_k \in L_1$ and $q_k \in L_{n-1}$, or when $p_k \in L_1$ and $q_k \in L_n$ for all $k \in \N$ passing by a subsequence.
    \medskip

    \textit{Case 1}: $p_k \in L_1$ and $q_k \in L_{n-1}$ for each $k \in \N$. \\
    If $n=2$, then this case will be covered by Case 2. We may therefore assume that $n>2$, so $d(L_1, L_{n-1})>0$.
    Since $L_1$ and $L_{n-1}$ are compact, $(f_k)$ converges uniformly to $f$ on $L_1 \cup L_{n-1}$. That is, 
$$
\sup_{u \in L_1,\,v \in L_{n-1}} |f_k(m_{u,v})-f(m_{u,v})| \to 0
$$
as $k\to\infty.$ It follows that 
\[
|\|f_k\| - f(m_{p_k,q_k})| = |f_k (m_{p_k,q_k}) - f(m_{p_k,q_k})| \to 0, 
\]
which implies that 
\[
\lim_{k\to\infty} |f(m_{p_k,q_k})| = \beta.
\]
However, we have from \eqref{eq:disjoint_segment_small_values} that $|f(m_{p_k,q_k})| \leq 1/2$ for all $k \in \mathbb{N}$; so $\beta \leq 1/2$. This contradicts $\beta \geq 1$.
    \medskip

    \textit{Case 2}: $p_k \in L_1$ and $q_k \in L_n$ for each $k \in \N$. \\
    Assume that $p_k$ converges to $p_0$ for some $p_0 \in L_1$. 
    
    \textit{Case 2-1}: If the sequence $(d(x_n,q_k))_k$ is bounded, then we can argue as same as \textit{Case 1} since $L_n$ is proper. 
    
    \textit{Case 2-2}: If the sequence $(d(x_n,q_k))_k$ is unbounded, fix $\alpha := \sup_k d(x_n,p_k) < \infty$ and for each $k \in \mathbb{N}$ with $d(x_n,q_k) > \alpha + \delta$ we define $r_k$ to be the element in $L_n$ such that $d(x_n,r_k) = \alpha + \delta$ and $r_k \in [x_n,q_k]$ from that $L_n$ is geodesic. Since $L_n$ is proper, the sequence $(r_k) \subseteq B(x_n,\alpha+2\delta)$ converges to some $r_0 \in L_n$ passing to a subsequence. Note that $d(x_n, r_0)=\alpha+\delta$. 
    
    We claim that $|f_k(p_k)-f_k(r_k)| \geq \|f_k\|\delta$ for all such $k$. If this is the case, by letting $k \to \infty$, we obtain 
    \[
    |f(p_0)-f(r_0)| \geq \beta \delta \geq \delta.
    \]
    However, $f(r_0)=0$ since $d(x_n,r_0)=\alpha+\delta>\delta/2$, so $|f(p_0)-f(r_0)|\leq \delta/2$. This is a contradiction. 
    
To verify the claim, observe that 
\begin{align*}
|f_k(p_k) - f_k(r_k)| &\geq |f_k(p_k) - f_k(q_k) | - |f_k(q_k) - f_k(r_k)| \\
&\geq \|f_k\| d(p_k,q_k) - \|f_k\|d(q_k,r_k) \\
&\geq \|f_k\| \bigl(d(x_n,q_k) - d(p_k,x_n) - d(q_k,r_k) \bigr)  \\
& \geq \|f_k\| ( d(x_n, r_k) - \alpha ) = \|f_k\|\delta 
\end{align*}
for every $k \in \mathbb{N}$. This completes the proof.
\end{proof}

\begin{rem}
    It is natural to ask whether the conclusion of Theorem \ref{theorem:compact-proper-length} remains valid when two or more of the proper length spaces $L_1,\ldots,L_n$, satisfying $\dist(L_j,L_k)>0$ for every $j\neq k$, are allowed to be non-compact. The following example shows that this is not the case. Thus, the condition that at most one of the spaces is non-compact is essential.
    
    Let $M= \mathbb{R} \times \{0,1\} \subseteq (\mathbb{R}^2, \| \cdot\|_1)$
    and put 
    \[
    L_1 = \mathbb{R} \times \{0\} \quad \text{and} \quad L_2 = \mathbb{R}\times \{1\}.
    \]
    Then $L_1$ and $L_2$ are proper length with $\dist(L_1,L_2)=1.$ In this case, Corollary \ref{cor:disjoint-ball-p_n} shows that $\LipSE(M)$ is weakly sequentially dense in $\Lip (M)$. In fact, choose $p_n=(4n,0)$, $q_n =(4n,1)$, and $r_n = 3/2$. Then 
    \[
    d(p_n,q_n)=1 < r_n \quad \text{and} \quad \sup_n \frac{d(p_n,q_n)}{r_n} = \frac{2}{3}<1.
    \]
    It is routine to check that $(p_n,q_n)$ does not have the property (Z); hence $m_{p_n,q_n} \in \str (B_{\mathcal{F}(M)})$ for every $n \in \mathbb{N}$. 
\end{rem}

Our last result concerns the case when the set of strongly exposed points in $\F(M)$ is norm-compact. For simplicity, we denote by $\|T_f\|_{\str (B_{\F(M)})} = \sup \{ |T_{f} (\mu)|: \mu \in \str (B_{\F(M)})\}$ for $f \in \Lip (M)$.

\begin{prop}\label{prop:compact_str}
        Let $M$ be a complete metric space. If $\text{str-exp}(B_{\F(M)})$ is norm-compact, then $\|f\| = \|T_f\|_{\str (B_{\F(M)})}$ for every $f \in \overline{\LipSE (M)}^{w\text{-seq}}$.
\end{prop}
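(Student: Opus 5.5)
Since $\str(B_{\F(M)}) \subseteq B_{\F(M)}$, we always have $\|T_f\|_{\str(B_{\F(M)})} \leq \|T_f\| = \|f\|$. Only the reverse inequality needs proof. Fix $f \in \overline{\LipSE(M)}^{w\text{-seq}}$ and take a sequence $(f_k) \subseteq \LipSE(M)$ with $f_k \xrightarrow{w} f$. Weakly convergent sequences are bounded, and $\|\cdot\|$ is weakly lower semicontinuous. So, after passing to a subsequence, $\|f_k\| \to \beta$ for some $\beta$ with $\|f\| \leq \beta < \infty$.

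For each $k$, Theorem \ref{theorem:Lip-linear} gives that $T_{f_k}$ is absolutely strongly exposing. It attains its norm at a molecule $\mu_k = m_{p_k,q_k}$, which is strongly exposed because it is the point strongly exposed by $T_{f_k}$. Replacing $\mu_k$ by $-\mu_k$ if needed (the set $\str(B_{\F(M)})$ is symmetric), we may assume $T_{f_k}(\mu_k) = \|f_k\|$. By norm-compactness of $\str(B_{\F(M)})$, a further subsequence gives $\mu_k \to \mu_0$ in norm with $\mu_0 \in \str(B_{\F(M)})$.

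The key step is to pass to the limit in $T_{f_k}(\mu_k)$. Split
\[
T_{f_k}(\mu_k) = T_{f_k}(\mu_k - \mu_0) + T_{f_k}(\mu_0).
\]
The first term is bounded by $\sup_j \|f_j\|\,\|\mu_k - \mu_0\| \to 0$. For the second term, weak convergence $f_k \to f$ in $\Lip(M) = \F(M)^*$ implies weak$^*$ convergence, since $\mu_0 \in \F(M) \subseteq \F(M)^{**}$. Hence $T_{f_k}(\mu_0) \to T_f(\mu_0)$. Together these give
\[
\|f\| \leq \beta = \lim_k \|f_k\| = T_f(\mu_0) \leq \|T_f\|_{\str(B_{\F(M)})}.
\]

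The only delicate point is that limits of strongly exposed points need not be strongly exposed in general. Here the compactness hypothesis is read as saying that $\str(B_{\F(M)})$ is a compact, and therefore closed, set, so $\mu_0$ stays in it. Without this, one would only get a limit in the closure, which is still enough to conclude if the supremum is interpreted over that closure.
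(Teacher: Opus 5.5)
Your proof is correct and follows essentially the paper's route. Both arguments use that each $f_k \in \LipSE(M)$ attains its norm at a point of $\str(B_{\F(M)})$, and both combine the boundedness of $(f_k)$ with the norm-compactness of $\str(B_{\F(M)})$. The paper phrases this as uniform convergence of $f_k$ to $f$ on the compact set, so that $\lim_k \|T_{f_k}\|_{\str(B_{\F(M)})} = \|T_f\|_{\str(B_{\F(M)})}$, whereas you extract a convergent subsequence of norming molecules and split $T_{f_k}(\mu_k)$.

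The paper also dispatches separately the trivial case $\str(B_{\F(M)})=\emptyset$, where $\LipSE(M)=\{0\}$ by its convention; your choice of a strongly exposed norming point for each $f_k$ tacitly excludes it. Your closing hedge is unnecessary, since by continuity of $T_f$ the supremum of $|T_f|$ over a set equals its supremum over the closure.
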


\begin{proof}
If $\str (B_{\F(M)})$ is empty, then $\LipSE (M) = \{0\}$, so there is nothing to prove. 
    If $(f_n)$ is a sequence in $\LipSE (M)$ converges weakly to some $f \in \Lip (M)$, then for any $p,q \in M$, we have 
\[
T_f (m_{p,q}) = \lim_n T_{f_n} (m_{p,q}) \leq \limsup_n \|f_n\|. 
\]
It follows that $\|f\| \leq  \limsup_n \|f_n\|$. Passing to a subsequence, we may assume that $ \lim_n \|f_n\| = \beta$ for some $\beta\geq \|f\|$ since $(f_n)$ is bounded. Moreover, weak convergence of $(f_n)$ implies pointwise convergence of $(f_n)$ on $\str (B_{\F(M)})$. By the uniform boundedness principle together with the standard compactness argument, we see that $(f_n)$ converges to $f$ uniformly on $\str (B_{\F(M)})$. Consequently, $\lim_n \|T_{f_n}-T_f\|_{\str (B_{\F(M)} )} = 0$; thus 
\[
\|f\| \leq \beta = \lim_n \|f_n\| = \lim_n \|T_{f_n}\|_{\str(B_{\F(M)}) } = \|T_f\|_{\str (B_{\F(M)})} \leq \|f\|.
\]
This shows that $\|f\| =\|T_f\|_{\str (B_{\F(M)})}$ if $f$ belongs to the weak sequential closure of $\LipSE (M)$.
\end{proof}

\begin{theorem}\label{theorem:str-exp-compact}
Let $M$ be a complete metric space and suppose that $\text{str-exp}(B_{\F(M)})$ is norm-compact. If $M = L \cup N$, where $L$ is a nontrivial length space and $N$ is a metric space with $\dist (L, N)>0$, then $\LipSE(M)$ is not weakly sequentially dense in $\Lip(M)$.
\end{theorem}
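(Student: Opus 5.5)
We argue by contradiction using Proposition \ref{prop:compact_str}: we construct a norm-one $f \in \Lip(M)$ with $\|T_f\|_{\str(B_{\F(M)})} < \|f\|$. Then $f$ is not in the weak sequential closure of $\LipSE(M)$.

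The key observation is that no molecule with both endpoints in $L$ is strongly exposed. Indeed, for $p \neq q$ in $L$, the pair $(p,q)$ has property \textup{(Z)} in $L$ because $L$ is length (\cite[Theorem 1.5]{AM}). The witnesses $r$ lie in $L \subseteq M$, so $(p,q)$ also has property \textup{(Z)} in $M$. By \eqref{iff_strexp_Z} (using completeness of $M$), $m_{p,q} \notin \str(B_{\F(M)})$. Hence every strongly exposed molecule $m_{u,v}$ has at least one endpoint in $N$. If both endpoints lie in $N$, then $f(m_{u,v})=0$ provided $f$ vanishes on $N$. If $u \in L$ and $v \in N$, then $d(u,v) \geq \delta := \dist(L,N) > 0$.

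Choose the function as follows. Since $L$ is nontrivial, pick $x_0 \in L$, and set
\[
f(p) = \max\Bigl\{\tfrac{\delta}{2} - d(p,x_0), 0\Bigr\} - c
\]
for a constant $c$, so that $f(0)=0$. A constant shift does not change differences, so it suffices to work with the unshifted function $g = \max\{\delta/2 - d(\cdot,x_0),0\}$ on $M$. Then $g$ vanishes on $N$, since $d(p,x_0) \geq \delta$ for $p \in N$. Its Lipschitz constant is $\|g\| = 1$: it is at most $1$ trivially. It equals $1$ because $L$ is length, so we can pick points of $L$ along almost-geodesic curves from $x_0$ within distance $\delta/2$. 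We need $L$ to have points at distance comparable to $\delta/2$, or at least some point $y \neq x_0$. Pick $y$ near $x_0$ on a near-geodesic, and note that $(g(x_0)-g(y))/d(x_0,y) = 1$ whenever $d(x_0,y) \le \delta/2$.

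If $L$ has small diameter, we rescale the bump radius to $\min\{\delta/2, \operatorname{diam} L\}$-type values. Any positive radius $\rho \le \delta/2$ works, and a length space with two points contains points at every small distance from $x_0$. For a strongly exposed molecule with $u \in L$ and $v \in N$, we get
\[
|g(m_{u,v})| \le \frac{\delta/2}{d(u,v)} \le \frac12.
\]
For molecules with both endpoints in $N$ the value is $0$. Thus $\|T_f\|_{\str(B_{\F(M)})} \le 1/2 < 1 = \|f\|$, and Proposition \ref{prop:compact_str} gives the conclusion.

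The main point to check carefully is the transfer of property \textup{(Z)} from $L$ to $M$ together with the use of \eqref{iff_strexp_Z}. This equivalence requires completeness of $M$, which is given, and the direction we use (property \textup{(Z)} implies not strongly exposed) is the easy one. A second point is the degenerate case in which $\str(B_{\F(M)})$ is empty. There Proposition \ref{prop:compact_str} already forces the weak sequential closure to be $\{0\}$, while $f \neq 0$, so the argument still applies.
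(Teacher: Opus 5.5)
Your proof is correct and follows essentially the same route as the paper. Both arguments take a $1$-Lipschitz bump at $x_0\in L$ that vanishes on $N$, observe that no strongly exposed molecule has both endpoints in $L$, bound $|T_f(m_{u,v})|$ by $1/2$ on $\str(B_{\F(M)})$, and conclude via Proposition~\ref{prop:compact_str}. The differences are only cosmetic: you use radius $\delta/2$ with $\delta=\dist(L,N)$, so the bound follows directly from $d(u,v)\ge\delta$, instead of the paper's $3\delta=d(x_0,N)$ with a triangle-inequality estimate, and you spell out the property \textup{(Z)} transfer from $L$ to $M$ and the normalization $f(0)=0$, which the paper leaves implicit.
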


\begin{proof}
Assume that $\str (B_{\F(M)} )$ is nonempty, otherwise, the result holds trivially. 
Fix any $x_0 \in L$, and let $3\delta := d(x_0,N)>0$. Define $f \in \Lip(M)$ by 
\[
f(p) := \max\{\delta-d(p,x_0),0\} \quad (p\in M).
\]
Since $L$ is length, $\|f\|=1$. 

Let $m_{u,v} \in \str (B_{\F(M)})$. We claim that $|T_f (m_{u,v})| \leq 1/2$. 
\begin{itemize}
    \itemsep0.25em 
    \item If $u,v \in N$, then $f(u)=f(v)=0$; so $T_f(m_{u,v})=0$. 
    \item Assume that either $u$ or $v$ is in $L$, say $ u \in L$. Then $v$ cannot belong to $L$ as it contradicts $m_{u,v}\in \str (B_{\mathcal{F}(M)})$. Thus, $v \in N$, so $f(v)=0$.
    \vspace{0.25em}
    \begin{itemize}
        \itemsep0.25em
        \item If $d(x_0, u) \geq \delta$, then $f(u)=0$; so $T_f (m_{u,v})=0$.
        \item If $d(x_0,u) < \delta$, then 
        \[
        |T_f(m_{u,v})| = \frac{|f(u)-f(v)|}{d(u,v)} \leq \frac{|f(u)|}{d(v,x_0)-d(x_0,u)} \leq \frac{\delta}{3\delta-\delta} \leq \frac{1}{2},
        \]
    \end{itemize}
\end{itemize}
This proves that $\|T_f\|_{\str (B_{\F(M)})} \leq 1/2$. Thus, Proposition \ref{prop:compact_str} shows that $f$ cannot be a weak sequential limit of Lipschitz functions in $\LipSE (M)$.
\end{proof}

\begin{example}
    Let $L=[0,1]$ be equipped with the usual Euclidean metric $d_L$ and 
    \[
    T=\bigcup_{n=1}^\infty [0_{\ell_1},e_n] \subset \ell_1
    \]
    endowed with the metric $d_T$ defined by
    \[
    d_T (se_n, te_m) = \begin{cases}
        |s-t|, \,\, &n=m \\ 
        s+t, \,\, &n\neq m,
    \end{cases}
    \]
    where $(e_n)$ is the canonical basis for $\ell_1$. Let $M = L \cup T$ with distance $d$ as follows:
    \[
    d(x,y)= d_L(x,0)+ 1+ d_T(0_{\ell_1},y)
    \]
    for every $x \in L$ and $y \in T$, while $d$ coincides with $d_L$ on $L$ and with $d_T$ on $T$.

    Note that $\dist(L,T)=1$, and $T$ is a complete length space. On the other hand, $T$ is not proper since $(e_n)$ is contained in $B(0_{\ell_1},1)$ and $\|e_n-e_m\| =2$ whenever $n\neq m$. Therefore, Theorem \ref{theorem:compact-proper-length} does not apply. Nevertheless, one can check that $\str (B_{\mathcal{F}(M)}) = \{\pm m_{0,0_{\ell_1}} \}$. Hence, Theorem \ref{theorem:str-exp-compact} yields that $\LipSE(M)$ is not weakly sequentially dense in $\Lip(M)$.
\end{example}

The preceding results provide several sufficient conditions and obstructions for the weak sequential denseness of $\LipSE(M)$. However, we were unable to obtain a complete characterization of when $\LipSE (M)$ is weakly sequentially dense. We therefore record the following question.

\begin{question}
Can one characterize those metric spaces $M$ for which $\LipSE(M)$ is weakly sequentially dense in $\Lip(M)$?
\end{question}

\noindent \textbf{Acknowledgement}. 

G. Choi was supported by the National Research Foundation of Korea(NRF) grant funded by the Korea government(MSIT) (RS-2026-25475373). M. Jung was supported by June E Huh Center for Mathematical Challenges (HP086601) at Korea Institute for Advanced Study and by the research fund of Hanyang University (HY-202500000003346).

\noindent \textbf{Declarations}. \\
\indent Conflict of interest: On behalf of all authors, the corresponding author states that there is no conflict of interest.

\noindent  \textbf{Data availability}. \\
\indent No data was used for the research described in the article.

\end{document}